\newcommand{\sub}{\subseteq}
\newcommand{\Z}{\mathbb{Z}}
\newcommand{\R}{\mathbb{R}}
\newcommand{\N}{\mathbb{N}}
\newcommand{\eps}{\varepsilon}
\newcommand{\fj}[2]{\left\lfloor #1 \right\rfloor_#2}
\newtheorem{theorem}{Theorem}[section]
\newtheorem{lemma}[theorem]{Lemma}
\newtheorem{proposition}[theorem]{Proposition}
\newtheorem{definition}[theorem]{Definition}
\numberwithin{equation}{section}
\DeclarePairedDelimiter{\norm}{\lVert}{\rVert}
\let\oldnorm\norm
\def\norm{\@ifstar{\oldnorm}{\oldnorm*}}
\begin{document}

\title[]
{Improved packing of hypersurfaces in $\R^d$}



\author{Xianghong Chen}

\address[Xianghong Chen]{Department of Mathematics, Sun Yat-sen University, Guangzhou, Guangdong 510275, P.R. China}
\email{chenxiangh@mail.sysu.edu.cn}

\author{Tongou Yang}

\address[Tongou Yang]{Department of Mathematics, University of California\\
Los Angeles, California 90095, United States}
\email{tongouyang@math.ucla.edu}

\author{Yue Zhong}

\address[Yue Zhong]{ Department of Mathematics, Sun Yat-sen University \\
Guangzhou, Guangdong 510275, P.R. China}
\email{zhongy69@mail3.sysu.edu.cn}



\begin{abstract}
For $d\ge 1$, we construct a compact subset $K\sub \R^{d+1}$ containing a $d$-sphere of every radius between $1$ and $2$, such that for every $\delta\in (0,1)$, the $\delta$-neighbourhood of $K$ has Lebesgue measure $\lesssim |\log \delta|^{-2/d}$. This is the smallest possible order when $d=2$, and improves a result of Kolasa-Wolff \cite{Kolasa_Wolff}. Our construction also generalises to H\"{o}lder-continuous families of $C^{2,\alpha}$ hypersurfaces with nonzero Gaussian curvature.
\end{abstract}

\maketitle

\tableofcontents

\section{Introduction}

In this paper, we establish the following main result.

\begin{theorem}\label{thm:sphere}
    Let $d\ge 1$. Then for every $\delta\in (0,1)$, there exists a compact subset $K_\delta\sub \R^{d+1}$ containing a $d$-sphere of every radius between $1$ and $2$, such that the $\delta$-neighbourhood of $K_\delta$ has Lebesgue measure $\lesssim_d|\log \delta|^{-2/d}$.
\end{theorem}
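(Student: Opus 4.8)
The plan is to take $K_\delta=\bigcup_{r\in[1,2]}S(c(r),r)$, a union of $d$-spheres whose centres $c(r)\in\R^{d+1}$ are chosen so that the $\delta$-annuli $\{x:|x-c(r)|\in(r-\delta,r+\delta)\}$ overlap as much as the geometry allows (the trivial choice of concentric spheres only gives an annulus of measure $\sim 1$), and to estimate the measure of the $\delta$-neighbourhood of $K_\delta$ by covering each sphere with a controlled family of flattened caps. The only quantitative input is an elementary \emph{flattening} estimate, a routine consequence of Taylor's theorem: fixing a point $p$ and a hyperplane through it, in coordinates $x=(x',x_{d+1})$ in which the hyperplane is $\{x_{d+1}=p_{d+1}\}$, the $d$-sphere of radius $r$ through $p$ tangent to that hyperplane is the graph $x_{d+1}=p_{d+1}+\tfrac{|x'-p'|^{2}}{2r}+O(|x'-p'|^{4})$; hence, over a window $\{|x'-p'|\le\rho\}$, the spheres with radius in an interval of length $\eta$ lie in a slab of thickness $\lesssim\rho^{2}\eta+\rho^{4}$, and more generally spheres merely passing within $s$ of $p$ with tangent within $a$ of the hyperplane lie there in a slab of thickness $\lesssim\rho^{2}\eta+\rho^{4}+s+\rho a$. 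Consequently the $\delta$-neighbourhood of such a family, restricted to the window, has measure $\lesssim\rho^{d}\bigl(\rho^{2}\eta+\rho^{4}+s+\rho a+\delta\bigr)$.

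Put $N\asymp|\log\delta|$ and $\rho:=N^{-1/d}$, so a $d$-sphere of radius $\sim 1$ is a union of $\asymp N$ caps of radius $\rho$. The construction runs in $N$ generations, and in the $j$-th generation it covers, for every $r$, one fresh $\rho$-cap of $S(c(r),r)$, the $N$ caps being arranged to tile each sphere. In generation $j$ one subdivides $[1,2]$ into $\asymp\eta_j^{-1}$ radius-intervals; over each interval one picks a point $p$ and a hyperplane, prescribes the corresponding spheres near $p$ only \emph{approximately} — each should pass within a slack $s_j$ of $p$ with tangent within $a_j$ of the hyperplane — and then applies the flattening estimate to cover the relevant $\rho$-cap of each of these spheres by a single box. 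Balancing $\eta_j$ against $\max(\rho^{2},\,s_j,\,\rho a_j,\,\delta)$ so that each box has measure $\asymp\rho^{d+2}\eta_j$, the $\asymp\eta_j^{-1}$ boxes of generation $j$ contribute $\lesssim\rho^{d+2}$; summing over the $N$ generations gives that the $\delta$-neighbourhood of $K_\delta$ has measure $\lesssim N\rho^{d+2}=N^{-2/d}\asymp|\log\delta|^{-2/d}$, as desired.

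The heart of the matter, and the step I expect to be the main obstacle, is that a $d$-sphere is rigid: its centre is determined (given the radius) by roughly $d+1$ conditions, so the $d+1$ soft conditions imposed in one generation essentially pin $c(r)$ down to within the slack $s_j$. To be able to carry out the next generation one must retain a residual freedom of size $\asymp(s_j,a_j)$ in the choice of $c(r)$; this is why the slacks must decrease from one generation to the next, and taking $s_j\asymp\rho a_j\asymp\rho^{2}2^{-j}$ until they reach the scale $\delta$ forces precisely $N\asymp|\log\delta|$ generations — which is exactly the number of $\rho$-caps needed to tile a $d$-sphere when $\rho=N^{-1/d}$. Making this precise requires interleaving the tangent points of successive generations (consecutive points at distance $\asymp\rho$) so that the caps genuinely tile each sphere, and controlling how the residual freedom and the accumulated positional error of $S(c(r),r)$ propagate from one window to an adjacent one; this is where the curvature hypothesis enters and where the estimate is delicate. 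Finally, since only the second-order Taylor expansion of the surface is used in the flattening step, the same scheme applies verbatim to H\"older-continuous families of $C^{2,\alpha}$ hypersurfaces of nonvanishing Gaussian curvature, yielding the stated generalisation.
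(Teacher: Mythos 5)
Your outline is, up to reparametrisation, the same construction as the paper's. You tile each sphere into $N\asymp|\log\delta|$ caps of radius $\rho\asymp N^{-1/d}$, subdivide $[1,2]$ into a geometrically growing number of radius-intervals per generation, use a flattening/tangency estimate to confine each radius-bundle near each fresh tangent point to a box of measure $\asymp\rho^{d+2}$, and sum to get $N\rho^{d+2}\asymp N^{-2/d}$. The paper realises your ``soft conditions'' explicitly: it divides the radius range into $2^{M^d}$ slabs (with $M^d\sim|\log\delta|$), and at step $j$ translates each group of $2^{j-1}$ consecutive slabs so that its representative surface becomes \emph{tangent} to the neighbouring group's at a fresh grid point $x_j$, the $x_j$ traversing a back-and-forth snake path through a grid of mesh $\sim\rho$ — exactly the interleaving you describe. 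The box-count you do and the fibrewise bound the paper does are Fubini-equivalent, and the arithmetic $N\rho^{d+2}=N^{-2/d}$ is the same.

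However, the step you yourself flag as ``the heart of the matter \dots where the estimate is delicate'' is the entire proof, and your outline does not close it. Your per-generation bound $\rho^{d+2}$ presupposes that the spheres in a given radius-interval, having been pinned near $p_1,\dots,p_{j-1}$ with slacks $s_1>\dots>s_{j-1}$, can also be pinned near $p_j$ with slack $s_j$ \emph{and} that the residual spread over the $\rho$-window at $p_j$ is still $O(\rho^2\eta_j)$; this requires quantitative control of the accumulated positional drift from all previous tangencies, which is not a formal consequence of the flattening estimate. In the paper this is exactly Lemma~2.3 together with the computation in Section~2.4: for a snake path and $|x-x_j|\lesssim\rho$ one has $\sum_{i\le j}2^i|x-x_i|^{1+\alpha}\lesssim 2^j\rho^{1+\alpha}$ (proved by summation by parts), which is what makes the contributions of the earlier, farther-away tangent points summable. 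You neither state nor prove an analogue of this inequality, so the measure is not actually bounded. Two smaller points: the stand-alone $\rho^4$ in your flattening estimate (an artefact of comparing to paraboloids instead of to the extremal spheres) is spurious and, taken literally, ruins the balance once $\eta_j\ll\rho^2$ — for those $j$ the per-generation contribution would be $\gtrsim\eta_j^{-1}\rho^{d+4}$, which sums to something far exceeding $N^{-2/d}$; and the absorption of the $\delta$-thickening into the finest slab is not automatic either — the paper handles it by repeating the first grid point $\sim\log M$ times so that the tangent point nearest any given $x'$ is always used at a sufficiently late step.
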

The first sentence in the abstract then follows from Theorem \ref{thm:sphere} via a standard iterative argument.

\subsection{Background}\label{sec:literature}
Theorem \ref{thm:sphere} is part of a subject known as the construction of Kakeya-type sets. Roughly speaking, it is concerned with packing geometric objects in $\R^d$ into a thin subset.

\subsubsection{Packing lines}
Packing straight line segments in $\R^d$ is known as the (linear) Kakeya problem. Besicovitch \cite{Besicovitch} constructed a compact subset of $\R^2$ of Lebesgue measure $0$ that contains a unit line segment in every direction, and a set of this type is now known as a Besicovitch/Kakeya set. On the other hand, Davies \cite{DaviesKakeya} showed that every Kakeya set in $\R^2$ must have full Hausdorff dimension (and hence Minkowski dimension) $2$. In higher dimensions $d\ge 3$, one can easily check that $K\times [0,1]^{d-2}$ is a Kakeya set in $\R^d$ when $K$ is a Kakeya set in $\R^2$, but it remains a famous open problem whether every Kakeya set in $\R^d$ must have full Hausdorff dimension. Some progress in this open problem in recent years include \cite{KatzZahlKayeka, KakeyaRdZahl, KakeyaRdHRZ, WangZahlSticky, WangZahlAssouad} (and see references therein).

Even in $\R^2$, it is interesting to find an exact dimension function for Kakeya sets. Schoenberg \cite{Schoenberg1, Schoenberg2} (see also \cite{Keich}) constructed a Kakeya set $K_1$ satisfying $|N_\delta(K_1)|\lesssim |\log \delta|^{-1}$. On the other hand, by proving a sharp upper bound for the Kakeya maximal function, C\'ordoba \cite{CordobaKakeyalowerbound} (see also references therein) showed that for every Kakeya set $K\sub\R^2$, we must have $|N_\delta(K)|\gtrsim |\log \delta|^{-1}$. This means that both results of Schoenberg and C\'ordoba are the best possible (up to absolute constant factors).\footnote{The case of Hausdorff dimension functions is more difficult; for example, Chen-Yan-Zhong \cite{ChenYanZhong} (see also \cite{Keich}) constructed a Kakeya set whose sharp Hausdorff gauge function is between $\delta^2 |\log \delta|$ and $\delta^2|\log \delta| (\log |\log \delta|)^\eps$, $\eps>0$.} For a few earlier results on line packing in $\R^2$, see \cite{Perron,Besicovitch1963}.

\subsubsection{Packing curves/surfaces}
Variants of the linear Kakeya problem have been extensively studied. For example, Kolasa-Wolff \cite{Kolasa_Wolff} constructed for $d\ge 1$ a compact subset $K_2\sub \R^{d+1}$ of Lebesgue measure $0$ that contains a $d$-sphere of every radius between $1$ and $2$. By inspecting their construction, one can deduce that
$|N_\delta(K_2)|\lesssim_d |\log \delta|^{-2/d}(\log |\log \delta|)^{2/d}$. Thus our main Theorem \ref{thm:sphere} is a refinement of \cite{Kolasa_Wolff} by removing the $\log\log$ factor.

This refinement is nontrivial, especially in the case $d=2$, for the following reason. In \cite{Kolasa_Wolff}, through the corresponding maximal operator bounds, Kolasa-Wolff proved that for every compact set $K\sub \R^{d+1}$ containing a $d$-sphere of every radius between $1$ and $2$, we must have $|N_\delta(K)|\gtrsim_d |\log \delta|^{-1}$ if $d\ge 2$. If $d=1$, an analogous but weaker result holds, namely, $|N_\delta(K)|\gtrsim_\eps \delta^{\eps}$ for every $\eps>0$, which is proved in \cite{Wolff_Kakeya_L3}\footnote{See also \cite{PYZ2022}, where the method could slightly refine the lower bound $\delta^\eps$ by carrying out the technical computations.}. In particular, when $d=2$, Theorem \ref{thm:sphere} attains the lower bound $|\log \delta|^{-1}$ for $|N_\delta(K)|$ (up to a constant factor). It remains open whether $|\log \delta|^{-2}$ is sharp for $d=1$, and whether $|\log \delta|^{-2/d}$ (or $|\log \delta|^{-1}$) is sharp for $d\ge 3$. 

See also \cite{BesicovitchRado, Kinney, Davies, Talagrand, Sawyer, Wisewell, ChangCsornyei, MR4534746} for results on packing of curves/surfaces in different settings.

\subsubsection{Lines versus circles}
The linear Kakeya set $K_1$ constructed by Schoenberg \cite{Schoenberg1, Schoenberg2} obeys $|N_\delta(K_1)|\lesssim |\log \delta|^{-1}$ which is sharp. The curved Kakeya set $K$ constructed in Theorem \ref{thm:sphere} obeys $|N_\delta(K)|\lesssim |\log \delta|^{-2}$, which is significantly smaller than $|\log \delta|^{-1}$. This means that it is easier for geometric objects with nonzero curvature to be packed in a thinner subset.

\subsection{Generalisation to hypersurfaces}
In fact, we will consider packing of more general hypersurfaces with nonzero Gaussian curvature, with regularity as low as possible. The case of spheres will follow as a corollary. We need a little more notation to state the general result.

Let $d\ge 1$. We say that $R_0\sub \R^d$ is an (axis-parallel) rectangle if $R_0=\prod_{i=1}^d I_i$ where each $I_i$ is a compact interval of positive length.

\begin{definition}\label{defn:regular_curved}
Let $[a_0,a_0+\delta_0]\sub \R$ and $R_0\sub \R^d$ be a compact rectangle. We say that $f(a,x)$
is a regular curved function defined in 
$[a_0,a_0+\delta_0]\times R_0$ if $\norm{\partial_a f}_{\infty}$, $\norm {\nabla_x \partial_a f}_{\infty}$, $\norm{D^2_x f}_\infty$ (and hence $\norm{f}_\infty$ and $\norm {\nabla_x f}_\infty$) are all finite, and in addition there are constants $c_0>0$, $\alpha\in (0,1]$, $C_\alpha>0$, $\eta\in (0,1]$ 
and $A_\eta>0$ such that the following holds 
for all $a,a'\in [a_0,a_0+\delta_0]$ and $x,x'\in R_0$:
\begin{align}
& |\det D^2_x f(a,x)|\ge c_0 , \label{eqn:curved} \\
& |D^2_x f(a,x)-D^2_x f(a,x')|\le C_\alpha|x-x'|^\alpha, \label{eqn:Lipschitz}\\
& |\partial_a f(a,x)-\partial_a f(a',x)|\le A_\eta |a-a'|^\eta,\label{eqn:Holder1}\\
&|\nabla_x f (a,x)-\nabla_x  f(a',x)|\le A_\eta|a-a'|^\eta.\label{eqn:Holder2}
\end{align}
\end{definition}
The first condition \eqref{eqn:curved} is the key curvature assumption. The other three relations are mild regularity conditions. In particular, if $f$ is $C^3$ in $x$, then $\alpha=1$.

\begin{theorem}\label{thm:f(a,x)_0}
    Assume that $R_0=\prod_{i=1}^d I_i\sub \R^d$ is a compact rectangle and $f(a,x)$ is a regular curved function defined in
$[a_0,a_0+\delta_0]\times R_0$. Then for every $\delta\in (0,1)$, there exists a compact subset $K\sub \R^{d+1}$ that satisfies the following.
\begin{enumerate}
    \item For every $a\in [a_0,a_0+\delta_0]$, the set $K$ contains a translated copy of the graph of $x\mapsto f(a,x)$ over $R_0$.
    \item $|N_\delta(K)|\lesssim |\log \delta|^{-(1+\alpha)/d}$. Here the implicit constant is independent of $a_0,\delta_0,\delta$, but depends on
\begin{equation}\label{eqn:vec_Cf}
\begin{aligned}
    \vec C_f
    &:=(d,|I_1|,\dots,|I_d|,c_0,\alpha, C_\alpha,\eta,A_\eta,\\
    &\norm{\partial_a f}_{\infty},\norm{\nabla_x f}_\infty,\norm {\nabla_x \partial_a f}_{\infty}, \norm{D^2_x f}_\infty).
\end{aligned}
\end{equation}
\end{enumerate}
\end{theorem}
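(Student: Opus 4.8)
The proof should reduce the continuous family of hypersurfaces to a discrete one and then perform a Besicovitch–Schoenberg-type iterated bisection ("sprouting") argument, which is the source of the logarithmic gain. I'll sketch the approach in four steps.

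\textbf{Proof strategy.} The plan is to pass from the continuous family of hypersurfaces to a discrete one, isolate the geometric mechanism behind the curvature hypothesis \eqref{eqn:curved}, and then run a multi-scale Perron--Schoenberg-type sprouting construction whose efficiency produces the logarithmic gain. \emph{Step 1 (discretisation).} Since $\norm{\partial_a f}_\infty<\infty$, graphs $\Gamma_a:=\{(x,f(a,x)):x\in R_0\}$ whose parameters are $\sim_{\vec C_f}\delta$-close (or, after a vertical shift, using \eqref{eqn:Holder2}, $\sim_{\vec C_f}\delta^{1/\eta}$-close) lie within distance $\lesssim\delta$; it therefore suffices to build a compact $K$ containing suitable translates of $\Gamma_{a_j}$ for a net $\{a_j\}_{j=1}^N$ of $[a_0,a_0+\delta_0]$ with $N\lesssim_{\vec C_f}\delta^{-1/\eta}$ and $|N_{\delta/2}(K)|$ small, and then to re-insert the translates of $\Gamma_a$ for the remaining $a$, with translation vectors varying continuously in $a$ and kept $\le\delta/2$-close to those used for the nearest $a_j$; this keeps $K$ compact and changes $N_\delta(K)$ by only a bounded factor. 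The case $N=O_{\vec C_f}(1)$ is trivial since $|N_\delta(\Gamma_{a_1})|\sim\delta|R_0|\ll|\log\delta|^{-(1+\alpha)/d}$, so we may assume $N$ large, recording $\log N\lesssim_{\vec C_f}|\log\delta|$.

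\emph{Step 2 (the sprouting lemma).} This is where the curvature enters. Over a cube $Q\subset R_0$ of side $\ell$, subtract from each $f(a_j,\cdot)$ its first-order Taylor polynomial at the centre of $Q$; the remainder is $O(\norm{D^2_x f}_\infty\ell^2)$, and by \eqref{eqn:Lipschitz} two such remainders coming from graph pieces that are to be slid onto one another differ by only $O(C_\alpha\ell^{2+\alpha})$, while by \eqref{eqn:Holder2} the affine parts across a bundle of parameter-width $w$ differ by $O(A_\eta w^\eta\ell)$. Hence an entire bundle of graph pieces over $Q$ can, after suitable vertical translations and one shared horizontal translation, be placed within $\delta$ of a single model piece as soon as $C_\alpha\ell^{2+\alpha}+A_\eta w^\eta\ell\lesssim\delta$; moreover, restricting a piece over $Q$ to a dyadic sub-cube and applying the parabolic rescaling $x\mapsto 2(x-x_Q)$ together with the matching dilation and recentring of the vertical variable reproduces a problem of the same type, with the non-affine error contracted by the factor $2^{\alpha}$. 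It is precisely this contraction rate that puts the exponent $\alpha$ into the final estimate.

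\emph{Step 3 (iteration and assembly).} Run $\sim_{\vec C_f}|\log\delta|$ generations of a balanced Perron--Schoenberg-type merging scheme on the dyadic decomposition of $R_0$: at each generation pair up the current bundles, use Step 2 to translate one member of each pair so the two coincide, up to $\delta$, over the largest common sub-cube their combined parameter-spread and the current scale permit, and pass to the next scale by parabolic rescaling. The measure of the resulting set equals the sum over the nodes of the merging tree of the ``new'' volume created there; estimating this sum, and checking that the accumulated affine and $C^{2,\alpha}$ errors — which decay geometrically down the tree by Step 2 — never exceed $\delta$, produces a compact $K$ containing a translate of every $\Gamma_{a_j}$ with $|N_\delta(K)|\lesssim_{\vec C_f}|\log\delta|^{-(1+\alpha)/d}$. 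Crucially, curved sprouting lets paired bundles agree over a \emph{whole} rescaled sub-cube, whereas for straight line segments one gains only a fixed fraction of a piece per merge; this is what upgrades the Schoenberg exponent $1$ of the linear problem to $(1+\alpha)/d$. One then re-inserts the continuous family as in Step 1 and checks that every constant introduced depends only on the entries of $\vec C_f$, not on $a_0$, $\delta_0$ or $\delta$.

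\emph{Main obstacle.} The crux is Step 2 together with the volume-and-error bookkeeping of Step 3: one must design the generation-by-generation merging schedule — how many bundles to combine, and down to which scale, at each level — so that the non-affine error stays below the tolerance $\delta$ throughout while the volume recursion remains efficient enough to output the clean power $|\log\delta|^{-(1+\alpha)/d}$. A sub-optimal schedule is exactly what costs Kolasa--Wolff the extra factor $(\log|\log\delta|)^{2/d}$, so the real work lies in this optimisation, carried out uniformly over all regular curved $f$ — in particular for merely $C^{2,\alpha}$ data and for Hessians of either signature.
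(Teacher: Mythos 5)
Your high-level plan (discretise the parameter family, then iteratively pair up and translate bundles to exploit curvature) matches the paper in spirit, but the concrete mechanism you propose in Steps 2--3 — nested dyadic sub-cubes of $R_0$ with parabolic rescaling — diverges from the paper's construction in a way that I believe is not just cosmetic but fatal. The paper works at a \emph{single} fixed spatial scale $\sim 1/M$ (with $M^d\sim|\log\delta|$): it lays down a ``snake'' path $x_1,\dots,x_m$ that visits every interior grid point of the $\sim 1/M$-grid on $R_0$, and at the $j$-th merging step it translates one member of each pair so that the two representative graphs become \emph{tangent at the single point} $x_j$. The crucial structural fact is that these tangency points \emph{tile} $R_0$: every $x'\in R_0$ is within $O(1/M)$ of some $x_j$, and the fibre estimate $|\{y:(x',y)\in F_M\}|\lesssim \delta_0 M^{-1-\alpha}$ is obtained by tracing the configuration back to step $j$ and summing $\sum_{i\le j} 2^i|x'-x_i|^{1+\alpha}\lesssim 2^j M^{-1-\alpha}$, which holds precisely because the snake path makes $|x_j-x_i|$ grow only linearly in $|j-i|$ (the paper's Lemma~\ref{lem:elementary}).

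Your scheme instead has the merging happen over a \emph{shrinking nested sub-cube} $Q_1\supset Q_2\supset\cdots$, one per generation, with the spatial scale halving each time. With $\sim|\log\delta|$ generations the sub-cubes shrink to a point: at the end the pieces agree (to accuracy $\delta$) only over a cube of side $\sim\delta$, and for $x'$ outside the final $Q_k$ the fibre measure has no reason to be smaller than the trivial $\sim\delta_0$. Put differently, the ``new volume created at each node of the merging tree'' is the volume of a bundle \emph{outside} the agreement sub-cube, and since $|R_0\setminus Q_k|\to |R_0|$, this does not sum to $O(\delta_0 M^{-1-\alpha})$. Parabolic rescaling does give the contraction of the $C^{2,\alpha}$ remainder you describe, but that local gain is not the bottleneck; what is needed is a \emph{global} scheduling that distributes the compression points across all of $R_0$ at a single fine scale. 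That is exactly what the snake path does, and it is what is missing from your Step 3. (A secondary point: the paper never merges everything into one bundle — it stops after $m\approx(M/2)^d$ of the $M^d$ possible merge steps and leaves $2^{M^d-m}$ bundles, which is fine because the fibre estimate sums $2^{M^d-j}$ groups each of thickness $\lesssim \delta_0 M^{-1-\alpha}2^{j-M^d}$.) So the gap is concrete: replace ``nested dyadic sub-cubes plus parabolic rescaling'' with ``a continuous path through a fixed $1/M$-grid and single-point tangency at each step,'' and then the summation lemma closes the argument.
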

Theorem \ref{thm:f(a,x)_0} generalises \cite[Theorem 1.3]{YangZhong} in the following aspects. First, it contains a higher dimensional result. Second, it considers a more general family of functions $f(a,x)$, not just functions of the form $af(x)$. Third, in the assumptions, we weakened the boundedness of $f'''$ to just $f''$ being Lipschitz (when $\alpha=1$) and even $\alpha$-H\"older continuous (whence the result is also weakened when $\alpha<1$.) Lastly and more importantly, we managed to remove the unnecessary condition (1.9) in \cite{YangZhong}. Previously, we thought it was necessary since it was used to prove \cite[Lemma 2.1]{YangZhong}, which was needed to show some monotonicity property related to the tangency of curves. It turns out, however, that this monotonicity property is not necessary in the proof of the upper bound of measure.

{\it Remark.} It is interesting to consider the following special case for which Theorem \ref{thm:f(a,x)_0} applies: $f(a,x):=ax+x^2$, where $a\in [0,1]$ and $x\in [0,1]$. Applying the bilipschitz mapping $(x,y)\mapsto (x,y-x^2)$ applied to $K$, it implies that there exists a family of unit line segments in $\R^2$ whose union $K'$ satisfies $|N_\delta(K')|\lesssim |\log \delta|^{-2}$, which seems to contradict C\'ordoba's result \cite{CordobaKakeyalowerbound} mentioned earlier. However, taking a closer look at the construction in Section \ref{sec:f(a,x)}, one can see that $N_\delta(K)$ only contains parabolas with apertures contained in a small interval of length $\sim\delta$, and so $N_\delta(K')$ only contains line segments with slopes contained in a small interval of length $\sim\delta$.

Next, we further generalise Theorem \ref{thm:f(a,x)_0} to hypersurfaces other than graphs, which is necessary in the proof of Theorem \ref{thm:sphere}.
\begin{definition}\label{defn:regular_curved_surface}
Let $I_0\times P_0\sub \R\times\R^{d+1}$ be a compact subset. We say that $\Phi(a,x):I_0\times P_0\to \R$ defines a regular family of curved hypersurfaces if $\norm{\partial_a \Phi}_{\infty}$, $\norm {\nabla_x \partial_a \Phi}_{\infty}$, $\norm{D^2_x \Phi}_\infty$ (and hence $\norm{\Phi}_\infty$ and $\norm {\nabla_x \Phi}_\infty$) are all finite, and in addition there are constants $c_0>0$, $c'_0>0$, $\alpha\in (0,1]$, $C_\alpha>0$, $\eta\in (0,1]$ and $A_\eta>0$ such that the following holds 
for all $a,a'\in  I_0$ and $x,x'\in P_0$:
\begin{align}
& \{x\in \,\mathrm{interior\,\, of} \,\,P_0:\Phi(a,x)=0 \}\ne\varnothing, \label{eqn:nonempty}\\
&|\nabla_x \Phi(a,x)|\ge c_0',\label{eqn:gradient}\\
& \left|\det \begin{bmatrix}
        D^2_x \Phi & \nabla_x \Phi\\
        (\nabla_x \Phi)^T & 0
    \end{bmatrix}(a,x)\right|\ge c_0, \label{eqn:curved'} \\
& |D^2_x \Phi(a,x)-D^2_x \Phi(a,x')|\le C_\alpha|x-x'|^\alpha, \label{eqn:Lipschitz'}\\
& |\partial_a \Phi(a,x)-\partial_a \Phi(a',x)|\le A_\eta |a-a'|^\eta,\label{eqn:Holder1'}\\
&|\nabla_x \Phi (a,x)-\nabla_x  \Phi(a',x)|\le A_\eta|a-a'|^\eta.\label{eqn:Holder2'}
\end{align}
\end{definition}
Note that \eqref{eqn:nonempty}, \eqref{eqn:gradient} and \eqref{eqn:Lipschitz'} imply that for each $a\in I_0$, the equation $\Phi(a,x)=0$ defines a (compact piece of a) $C^{2,\alpha}$ hypersurface $S_a$ in $\R^{d+1}$. The condition \eqref{eqn:curved'} means that the Gaussian curvatures of $S_a$, $a\in I_0$ are uniformly away from $0$.

\begin{theorem}\label{thm:surface}
Let $I_0\times P_0\sub \R\times\R^{d+1}$ be a compact subset and let $\Phi(a,x):I_0\times P_0\to \R$ define a regular family of curved hypersurfaces $S_a$. Then for every $\delta\in (0,1)$, there exists a compact subset $K\sub\R^{d+1}$ that contains a translated copy of $S_a$ for every $a\in I_0$, such that $|N_\delta(K)|\lesssim |\log \delta|^{-(1+\alpha)/d}$. The implicit constant here does not depend on $\delta$, but may depend on every other parameter involved.
\end{theorem}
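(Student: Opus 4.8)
The plan is to deduce Theorem~\ref{thm:surface} from the graph case, Theorem~\ref{thm:f(a,x)_0}, by realising each hypersurface $S_a$ locally as a graph. The set $Z:=\{(a,x)\in I_0\times P_0:\Phi(a,x)=0\}$ is compact, and by \eqref{eqn:gradient} at every $(a_*,x_*)\in Z$ some partial derivative obeys $|\partial_{x_i}\Phi(a_*,x_*)|\ge c_0'/\sqrt{d+1}$; by continuity of $\nabla_x\Phi$ this persists on a relatively open box containing $(a_*,x_*)$ in $I_0\times P_0$, on which the implicit function theorem solves $\Phi(a,x)=0$ for $x_i=g(a,x')$, where $x'$ collects the remaining coordinates. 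Covering $Z$ by finitely many such boxes and shrinking, one obtains finitely many data $(J_m,i_m,R_m,g_m)$, $m=1,\dots,M$, with $J_m\sub\R$ a compact interval meeting $I_0$, $R_m\sub\R^d$ a compact rectangle in the coordinates other than $x_{i_m}$, and $g_m(a,x')$ solving $\Phi=0$, arranged so that for every $a\in I_0$ the graph pieces over those $m$ with $a\in J_m$ together cover $S_a$.

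Next I would verify that each $g_m$ is a regular curved function on $J_m\times R_m$ in the sense of Definition~\ref{defn:regular_curved}. The classical implicit-function formulas express $\nabla_{x'}g_m$ and $D^2_{x'}g_m$ as rational expressions in the first and second derivatives of $\Phi$ with a power of the nonvanishing $\partial_{x_{i_m}}\Phi$ in the denominator; combining these with \eqref{eqn:Lipschitz'}, \eqref{eqn:Holder1'}, \eqref{eqn:Holder2'}, the lower bound \eqref{eqn:gradient} (which keeps the denominator away from $0$), and the finiteness of $\norm{\nabla_x\Phi}_\infty$ and $\norm{D^2_x\Phi}_\infty$ yields \eqref{eqn:Lipschitz} with the same $\alpha$ and \eqref{eqn:Holder1}, \eqref{eqn:Holder2} with some exponent $\eta'\in(0,\eta]$ in $a$; the precise value of $\eta'$ is immaterial since it never enters the measure bound in Theorem~\ref{thm:f(a,x)_0}. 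The substantive point is the curvature bound \eqref{eqn:curved} for $g_m$, which follows from the classical local identity
\begin{equation*}
\det D^2_{x'}g_m=\pm\,(\partial_{x_{i_m}}\Phi)^{-(d+2)}\det\begin{bmatrix}D^2_x\Phi&\nabla_x\Phi\\(\nabla_x\Phi)^T&0\end{bmatrix}
\end{equation*}
on $\{\Phi=0\}$, together with \eqref{eqn:curved'}, \eqref{eqn:gradient}, and the upper bound $|\partial_{x_{i_m}}\Phi|\le\norm{\nabla_x\Phi}_\infty$. Thus Theorem~\ref{thm:f(a,x)_0} applies to each $g_m$ and furnishes a compact $K_m$ with $|N_\delta(K_m)|\lesssim|\log\delta|^{-(1+\alpha)/d}$, the implicit constant depending only on the data \eqref{eqn:vec_Cf} attached to $g_m$, hence ultimately only on the parameters in Definition~\ref{defn:regular_curved_surface}.

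Setting $K:=\bigcup_{m=1}^M\widetilde K_m$, where $\widetilde K_m$ is $K_m$ transported to $\R^{d+1}$ by the coordinate identification making $x_{i_m}$ the last variable, subadditivity of Lebesgue measure gives $|N_\delta(K)|\le\sum_{m=1}^M|N_\delta(\widetilde K_m)|\lesssim M\,|\log\delta|^{-(1+\alpha)/d}\lesssim|\log\delta|^{-(1+\alpha)/d}$, since $M$ depends only on the allowed parameters.

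The main obstacle is the packing requirement that $K$ contain a translated copy of the \emph{whole} surface $S_a$: the procedure above only places each local piece of $S_a$ inside the matching $\widetilde K_m$ translated by some vector $v_{m,a}$, and there is no reason these vectors should agree across $m$, so the translated pieces need not reassemble into a single translate of $S_a$ (when $S_a$ is, say, a full sphere, this assembly is genuinely a multi-parameter problem, not a union of independent graph constructions). To overcome this I would not run the $M$ copies of Theorem~\ref{thm:f(a,x)_0} separately, but rerun the underlying construction simultaneously on all the patches, forcing the translations to be a single vector $v_a$ depending only on $a$ and on a fixed combinatorial template — a fixed grid on the $a$-range and a fixed nesting pattern of stacked copies — rather than on the individual $g_m$; equivalently, one redoes the proof of Theorem~\ref{thm:f(a,x)_0} directly in the level-set language of Definition~\ref{defn:regular_curved_surface}, manipulating the sets $\{\Phi(a,x)=0\}$ in place of graphs and allowing general ambient translations, with the local graph description above supplying the needed estimates on how tightly two surfaces with nearby parameters cling near a point of tangency. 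Showing that the translations in the construction of Theorem~\ref{thm:f(a,x)_0} can be chosen uniformly in the curved function, and hence compatibly across the finitely many coordinate patches covering $S_a$, is where I expect the real work to lie.
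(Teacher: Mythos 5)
Your reduction to local graphs via the implicit function theorem, and the verification that each chart function is a regular curved function (including the bordered-Hessian identity for the curvature condition), agree with the paper's setup; and you correctly identify the crucial obstruction: a translation $w(a)$ of the \emph{whole} hypersurface $S_a$ must compress all coordinate patches at once, whereas an independent application of Theorem~\ref{thm:f(a,x)_0} to each patch would yield incompatible translations. However, the resolution you sketch — ``rerun the construction simultaneously on all patches with a fixed combinatorial template forcing a single translation vector'' — is not the paper's idea and, as stated, is unlikely to go through: the translations $(u_{j,n},v_{j,n})$ in Proposition~\ref{prop:f(a,x)_delta} are determined by requiring tangency at specific grid points $x_j$ of one particular chart domain, and a translation that creates tangency at a grid point of patch $1$ will not generically create tangency at a grid point of patch $2$, so a ``universal'' translation achieving compression in all patches at the same step does not exist.

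The missing key idea is a \emph{sequential, scale-nested} compression. After partitioning $I_0$ and writing $S_a$ as a union of $N$ graph pieces, the paper works at a reduced scale $M_0:=N^{-1/d}M$ and proceeds patch by patch: Step $1$ applies Proposition~\ref{prop:f(a,x)_delta} to $f_1$ on $I_0$, producing a translation $w_1(a)$ piecewise constant on intervals of length $2^{-M_0^d}$ and compressing the $\Gamma_1$-piece at neighbourhood scale $2^{-M_0^d}$; Step $2$ then applies the same proposition to $f_2$ \emph{on each subinterval $I_1$ of length $2^{-M_0^d}$ separately}, producing a further translation $w_2(a)$ of magnitude $O(2^{-M_0^d-M_0^d/3})$ that is piecewise constant on intervals of length $2^{-2M_0^d}$ and compresses the $\Gamma_2$-piece at neighbourhood scale $2^{-2M_0^d}$; and so on for $N$ steps. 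The point is quantitative: each later translation is so small ($\ll$ the previous neighbourhood scale) that adding it, and shrinking the neighbourhood to the new finer scale, \emph{preserves} all earlier patches' compressions, so after $N$ steps the $2^{-NM_0^d}=2^{-M^d}$ neighbourhood of the fully translated family has measure $\lesssim M_0^{-1-\alpha}\sim M^{-1-\alpha}$ on all of $\R^{d+1}$. Without this nesting-of-scales mechanism, your outline has a genuine gap at precisely the step you flagged.
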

For the case of $d$-spheres, it is easy to see that $\Phi(a,x):=|x|-a$, $1\le a\le 2$ satisfies Definition \ref{defn:regular_curved_surface} with $\alpha=1$ and every other parameter involved depending on $d$ only. Thus Theorem \ref{thm:sphere} follows from Theorem \ref{thm:surface}.

We also remark that Theorem \ref{thm:surface} assumes no convexity of the surfaces $S_a$ as long as the Gaussian curvatures are away from $0$. In particular, it applies to the family $\Phi(a,x_1,x_2,x_3):=x_3-ax_1x_2$ of hyperbololic paraboloids in $\R^3$. It is an interesting question to study, for example, the case of light cones given by $\Phi(a,x_1,x_2,x_3):=x_3^2-ax_1^2-ax_2^2$, which is not covered by Theorem \ref{thm:surface}. Lastly, we remark that having nonzero Gaussian curvature is not necessary for the hypersurfaces to be able to be packed into a thin subset, as can be seen from the family of cylinders $\{(a\cos \theta,a\sin \theta):\theta\in [0,2\pi]\}\times [0,1]\sub \R^3$, $1\le a\le 2$. Indeed, this is essentially the packing of circles in $\R^2$, in which we are even able to compress better than in the case of spheres in $\R^3$. We therefore anticipate that it is the abundance of normal directions of the hypersurfaces that prevents packing.

\subsubsection{Cinematic curvature}
It is very natural to compare the curvature condition \eqref{eqn:curved'} to another widely studied notion of curvature, known as the {\it cinematic curvature}, which was introduced by Sogge \cite{Sogge_first_cinematic}, and later further studied in \cite{Kolasa_Wolff, PYZ2022, ChenGuoYang2023, Zahl2023}. 

We point out the main difference. Roughly speaking, a cinematic curvature condition measures how much the curvatures of a family change according to the given parameters, and formally it is usually concerned with third-order derivatives of the functions. This type of condition is usually used to prove a $L^p$-$L^q$ upper bound for its associated maximal operator (such as \cite{Kolasa_Wolff} mentioned in Section \ref{sec:literature}), which consequently {\it prevents} the packing of curves into a thin subset.

In comparison, the curvature condition as in \eqref{eqn:curved'} is less sophisticated, and as stated in Theorems \ref{thm:f(a,x)_0} and \ref{thm:surface}, it enables us to pack curved surfaces into thinner subset.

In particular, for the family of circles in $\R^2$ with radii ranging from $1$ to $2$ (and also the family of parabolas with aperture ranging through $1$ and $2$ studied in \cite{YangZhong}), it can be checked that both the cinematic curvature condition and \eqref{eqn:curved'} hold, and this is why we have both upper and lower bounds for the measure of packing of curves.

\subsection{Notation}

\begin{enumerate}
 \item We use the standard notation $a=O_C(b)$, or $|a|\lesssim_C b$ to mean there is a constant $C$ such that $|a|\leq Cb$. In many cases, such as results related to polynomials, the constant $C$ will be taken to be absolute or depend on the polynomial degree only, and so we may simply write $a=O(b)$ or $|a|\lesssim b$. Similarly, we define $\gtrsim$ and $\sim$.

    \item For a set $S\sub \R^d$ and $\delta>0$, we denote by $N_\delta(S)$ the $\delta$-neighbourhood of $S$. $|A|$ will denote the Lebesgue measure of $A$, where the dimension will be clear from the context.
    
\end{enumerate}

\subsection{Acknowledgements}
Xianghong Chen and Yue Zhong are supported in part by the National Key R\&D Program of China (No. 2022YFA1005700) and the NNSF of China (No. 12371105). Tongou Yang is supported by the Croucher Fellowships for Postdoctoral Research. The authors would like to thank Professor Lixin Yan for helpful suggestions.

\section{Packing graphs of functions}\label{sec:f(a,x)}
In this section we prove the following proposition, which implies Theorem \ref{thm:f(a,x)_0}.

\begin{proposition}\label{prop:f(a,x)_delta}
    
Assume that $R_0$ is a compact rectangle and $f(a,x)$ is a regular curved function defined in
$[a_0,a_0+\delta_0]\times R_0$. 
Then for every $M\ge 1$, there exist functions $u(a)$, $v(a)$ of the order $O(\delta_0 2^{-M^d/3})$ that are piecewise constant on intervals of the form $(a_0+j2^{-M^d}\delta_0,a_0+(j+1)2^{-M^d}\delta_0)$, $0\le j\le 2^{M^d}-1$, such that the $\delta_0 2^{-M^d}$ neighbourhood of the following subset
\begin{equation}\label{eqn:F_M}
\begin{aligned}
    F_M:=&\{(x+u(a),f(a,x)+v(a)):x\in R_0,a\in [a_0,a_0+\delta_0]\}
\end{aligned}
\end{equation}
has Lebesgue measure $\lesssim \delta_0 M^{-1-\alpha}$.

Here and throughout this section, the implicit constants are independent of $M,a_0,\delta_0$, but depend on $\vec C_f$ as in \eqref{eqn:vec_Cf}.
\end{proposition}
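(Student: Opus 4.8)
The plan is to realize the graph of each $f(a,\cdot)$ as a thin union by exploiting a hierarchical (multi-scale) translation scheme à la Schoenberg/Keich for the linear Kakeya problem, adapted to the curved setting. I would first reduce to a normalized situation: by rescaling $a$ to lie in $[0,1]$ and absorbing constants into $\vec C_f$, and by subdividing the rectangle $R_0$ into roughly $M^d$ congruent subrectangles (a grid of $M$ slabs in each of the $d$ coordinate directions), I can work piece by piece. The key local fact is that on a subrectangle of sidelength $\sim M^{-1}$, the curvature condition \eqref{eqn:curved} together with the Lipschitz bound \eqref{eqn:Lipschitz} forces the graph of $f(a,\cdot)$ to differ quantitatively from any ``osculating'' affine-quadratic model, so that after the translation in the $a$-parameter the pieces corresponding to different $a$-intervals genuinely overlap on a set of positive proportional size rather than stacking disjointly.

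The main construction is a binary (or $2^{M^d}$-adic) tree on the parameter interval: I dyadically partition $[a_0,a_0+\delta_0]$ into $2^{M^d}$ intervals of length $2^{-M^d}\delta_0$, and build $u(a),v(a)$ level by level. At level $k$ (for $k$ roughly running up to $M^d$), one splits each current parameter interval in half and applies a translation correction of size $\sim \delta_0 2^{-k/3}$ chosen so that the two halves' graph-pieces are brought into maximal tangential overlap over the corresponding spatial subrectangle. The exponent $1/3$ is dictated by the curvature: two nearby translated copies of a curved graph can be made to agree to within $\delta$ over a spatial region of measure $\sim \delta^{1/3}$ times the region (the ``$\delta^{1/3}$-tangency'' phenomenon for curves with nonvanishing second derivative — in $d$ variables this is applied coordinate-direction by coordinate-direction), so the total displacement of $u,v$ telescopes to $O(\delta_0 2^{-M^d/3})$ as claimed, and the overlap gains a factor $\sim (1-c)$ at each of the $\sim M^d$ levels, i.e.\ a geometric series, forcing the final neighbourhood measure down to $\lesssim \delta_0 M^{-1-\alpha}$. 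The Hölder conditions \eqref{eqn:Holder1}–\eqref{eqn:Holder2} are what let one treat $\partial_a f$ and $\nabla_x f$ as essentially constant across each tiny $a$-interval, so that the translation in $a$ really does behave like a rigid translation of the corresponding graph piece up to negligible error.

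Concretely, I would: (i) set up the spatial grid and the parameter tree and define $u,v$ recursively; (ii) prove a one-step tangency lemma — for a regular curved function on a subrectangle of sidelength $r$, two copies with parameters in a subinterval of length $\rho$, after an optimal translation of size $O(\rho^{?} r^{?})$ or $O(r^2)$ depending on bookkeeping, overlap (within the relevant $\delta$-scale) on a proportion $\ge 1-c$ of one another, quantitatively using \eqref{eqn:curved}, \eqref{eqn:Lipschitz}; (iii) iterate, tracking that the accumulated translations stay $O(\delta_0 2^{-M^d/3})$ and that the measure of the $\delta_0 2^{-M^d}$-neighbourhood of $F_M$, which is initially $\sim \delta_0$ times the area of $R_0$, is multiplied by $(1-c)$ at each level and picks up the final polynomial gain $M^{-1-\alpha}$ from the interplay of $\alpha$-Hölder curvature regularity with the number of levels; (iv) verify compactness and the piecewise-constancy of $u,v$ on the stated intervals.

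The main obstacle I anticipate is step (ii)–(iii) done \emph{uniformly in the $d$ coordinate directions simultaneously}: in one variable the $\delta^{1/3}$-overlap for curves is classical, but here one must compress in all $d$ spatial directions at once while a single scalar parameter $a$ is available for translation, so the tangency gains must be arranged so their product over the $M^d$ leaves of the tree still yields the clean bound $M^{-(1+\alpha)/d}$ after unscaling $M^d \mapsto$ (number of levels) — equivalently, the per-direction gain must be $\sim M^{-(1+\alpha)/d}$ and these must genuinely multiply rather than merely add. Making the curvature hypothesis \eqref{eqn:curved} (a determinant lower bound, not a bound on each direction separately) deliver a direction-by-direction overlap gain, and ensuring the Hölder error terms from \eqref{eqn:Holder1'}–\eqref{eqn:Holder2'} do not destroy the geometric decay across $\sim M^d$ levels, is the delicate part; the total translation size $O(\delta_0 2^{-M^d/3})$ is then a bookkeeping consequence of the geometric series of per-level corrections.
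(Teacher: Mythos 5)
Your overall framing — a dyadic tree on the parameter interval $[a_0,a_0+\delta_0]$, translations chosen at each level so that sibling pieces become tangent, and reduction to a modular quantitative statement — is the right \emph{skeleton} and does match the spirit of the paper (which itself builds on Schoenberg/Keich and on \cite{YangZhong}). But the two mechanisms you propose to close the argument are both incorrect, and what is actually missing is precisely the key idea.

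First, the ``gain a factor $(1-c)$ at each of $\sim M^d$ levels'' picture cannot be right: $(1-c)^{M^d}$ is exponential decay in $M^d$, whereas the target bound is only $M^{-1-\alpha}$, and for the analogous linear problem C\'ordoba's lower bound rules out anything better than $|\log\delta|^{-1}$. The construction does not compress the cross-section by a uniform proportion at each step. Instead, the paper divides $R_0$ into an $(M/2)^d$ grid and chooses a sequence of interior grid points $x_1,\dots,x_m$ (with $m\approx (M/2)^d$, \emph{not} $M^d$) that form a ``continuous path'' — a Hamiltonian walk where $|x_{j+1}-x_j|$ is the grid spacing — and at step $j$ makes the siblings tangent at $x_j$. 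The $O(\delta_0 2^{-M^d/3})$ translation bound is a simple geometric-series consequence of having only $m\approx (M/2)^d\ll M^d$ steps; no $\delta^{1/3}$-tangency enters. The measure estimate is then pointwise in the cross-section variable $x'$: one picks the grid point $x_j$ nearest $x'$, groups the $2^{M^d}$ pieces into $2^{M^d-j}$ blocks that move rigidly together after step $j$, and bounds the thickness of one block by a telescoping computation. This reduces to the inequality $\sum_{i=1}^{j}2^i|x'-x_i|^{1+\alpha}\lesssim 2^j M^{-1-\alpha}$, which is exactly where the continuous-path choice and a summation-by-parts lemma are used, and where the $\alpha$-H\"older condition on $D_x^2 f$ produces the exponent $1+\alpha$ (via $\nabla_x g(x_i)=0$ and the Taylor remainder for the auxiliary function $g$).

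Second, the ``direction-by-direction'' worry is a red herring: there is no per-coordinate compression. The determinant lower bound \eqref{eqn:curved} is used only to invert $D_x^2 f$ when solving the tangency equation and to keep the translations of the correct size; the $1+\alpha$ gain comes from a single Taylor expansion around the chosen grid point, not from a product of $d$ one-dimensional gains. What your proposal is missing, and what is genuinely the crux of the proof, is (a) the choice of a Hamiltonian path through the interior grid points, so that the tangency points $x_1,\dots,x_m$ move slowly; and (b) the elementary inequality (Lemma \ref{lem:elementary} in the paper) showing that the resulting weighted sum over previous tangency points is dominated by the term at the closest one. Without this, the measure does not come out to $M^{-1-\alpha}$.
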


The rest of this section is devoted to the proof of this proposition, which closely resembles that of \cite[Section 2]{YangZhong}. Apart from the improvements mentioned right after Theorem \ref{thm:f(a,x)_0}, we also use a minor technique (see Section \ref{sec:repeat_xj}) so that we will no longer need to remove a small portion of the interval.

Now we start with the proof. We can assume $R_0=[0,1]^d$ for simplicity of notation, since the implicit constants are allowed to depend on $|I_1|,\dots,|I_d|$. We can also assume $M$ is even, and is large enough depending on $\vec C_f$, since for small $M$ the
result is trivial.

\subsection{Notation}
Start with the initial ``curved rectangle"
\begin{equation*}
    \{(x,f(a,x)):x\in R_0,a\in [a_0,a_0+\delta_0]\}.
\end{equation*}
Fix $M\in 2\N$. Denote
\begin{equation*}
    a_n:=a_0+n\delta_0 2^{-M^d}, \quad n=0,1,\dots,2^{M^d}-1,
\end{equation*}
and correspondingly divide the initial curved rectangle into $2^{M^d}$ smaller curved rectangles
\begin{equation*}
    T_{0,n}:=\{(x,f(a,x)):x\in R_0,a\in [a_n,a_n+\delta_0 2^{-M^d}]\},\quad n=0,1,\dots,2^{M^d}-1.
\end{equation*}
Define a ``representative'' function of $T_{0,n}$ by
\begin{equation*}
    L_{0,n}(x):=f(a_n,x).
\end{equation*}
Also, define the following scales
\begin{align}
    &\lambda:=\text{smallest integer }\ge  2\log_2 M,\label{eqn:defn_lambda}\\
    &m:=\left(\frac M 2-1\right)^d+\lambda.\label{eqn:defn_m}
\end{align}
\subsubsection{A continuous path on grid points}\label{sec:path}
Tile $R_0=[0,1]^d$ uniformly into a grid of $(M/2)^d$ congruent rectangles. This gives rise to exactly $(M/2-1)^d$ ``interior" grid points. 

Denote $J:=M/2-1$. We define a ``continuous path" $\tilde x_j$, $1\le j\le J^d$ on these interior grid points. More precisely, $\tilde x_j$ is a bijection from $1\le j\le J^d$ to the set of all such grid points, such that $|\tilde x_{j+1}-\tilde x_j|$ is exactly the distance between two adjacent grid points. 

There are many possibilities of such continuous paths. For example, when $d=2$, we may define a ``back-and-forth" path as follows: 
\begin{equation*}
    \tilde x_j:=\begin{cases}
        (2jM^{-1},2M^{-1}),\quad & 1\le j\le J,\\
        (2(M-j-1)M^{-1},4M^{-1}),\quad & J+1\le j\le 2J,\\
        \text{etc.},
    \end{cases}
\end{equation*}
until we exhaust all such grid points. When $d=3$ (denoted as $xyz$ space), a back-and-forth path may be defined as follows: first, we define a continuous path in the plane $z=2M^{-1}$, which ends at some point $(a,b,2M^{-1})$. Then we define the next point on the path by $(a,b,4M^{-1})$, from which we then define another continuous path in the plane $z=4M^{-1}$, and then continue this process until we exhaust all grid points. The general construction follows by induction on the dimension $d$.

\subsubsection{A slightly modified sequence of grid points}\label{sec:repeat_xj}
Let $\tilde x_j$ be a continuous path on the interior grid points given by Section \ref{sec:path}. For technical purposes (see \eqref{eqn:single_thickness_bound}), we will be using a slightly different sequence. More precisely, we define a sequence $x_j$, $1\le j\le m$ as follows (recall \eqref{eqn:defn_lambda}):
\begin{equation}\label{eqn:defn_x_j}
x_j:=
    \begin{cases}
        \tilde x_1,\quad  & 1\le j\le \lambda,\\
        \tilde x_{j-\lambda}, \quad  &\lambda+1\le j\le m.
    \end{cases}
\end{equation}
That is, the first grid point $\tilde x_1$ is repeated by exactly $\lambda+1$ times.

\subsection{The construction algorithm}
\subsubsection{Step 1}
For each $n=1,3,\dots,2^{M^d}-1$, we apply a suitable translation to $T_{0,n}$ so that the function $L_{0,n}$, after translation, will be tangent to $L_{0,n-1}$ at $x_1$. That is, we need to find translations $u:=u_{1,n}$, $v:=v_{1,n}$ such that
\begin{align}    
    &f(a_n, x_1-u) + v = f(a_{n-1}, x_1), \label{eqn:equal_x1}\\
    &\nabla_x f (a_n, x_1-u) =  \nabla_x f(a_{n-1}, x_1). \label{eqn:tangent_x1}
\end{align}
To find the expressions of $u,v$, we first focus on \eqref{eqn:tangent_x1}. First, using the implicit function theorem and the fact that $|\det D^2_{x} f|\ge c_0$, we see that for $M$ large enough, 
such $u$ always exists, and we have the bound
\begin{equation*}
|u|\lesssim a_n-a_{n-1}=\delta_0 2^{-M^d}.
\end{equation*}
To find the expression of $u$, we use Taylor expansion and \eqref{eqn:Lipschitz} and \eqref{eqn:Holder2} to compute for each $1\le l\le d$ that (where $\partial_l$ denotes the partial derivative with respect to the $l$-th coordinate of $x$)
\begin{align*}
&\partial_l f(a_n,x_1-u)=\partial_l f(a_n,x_1)-u\cdot \nabla_x \partial_{l} f(a_n,x_1)+O(\delta_0 2^{-M^d})^2,\\
&\partial_l f(a_{n-1},x_1)=\partial_l f(a_n,x_1)-(a_n-a_{n-1})\partial_{a}\partial_l f(a_n,x_1)+O(\delta_0 2^{-M^d})^{1+\eta},
\end{align*}
and so \eqref{eqn:tangent_x1} gives that
\begin{equation}\label{eqn:Dec_23_u1}
u=u_{1,n}=\delta_0 2^{-M^d}h(a_n,x_1)+O(\delta_0 2^{-M^d})^{1+\eta},
\end{equation}
where
\begin{equation}\label{eqn:defn_h}
    h(a,x):=(D^2_x f)^{-1}(a,x)\nabla_x \partial_a f(a,x).
\end{equation}
Plugging this into \eqref{eqn:equal_x1}, we can find the expression of $v$.

After Step 1, we obtain $2^{M^d-1}$ ``larger'' curved figures
\begin{equation}\label{eqn:T1}
    T_{1,n}:=T_{0,n}\bigcup (T_{0,n+1}+(u_{1,n+1},v_{1,n+1})),\quad n=0,2,\dots,2^{M^d}-2,
\end{equation}
which are well compressed near $x_1$. Denote
\begin{equation*}
    T_1:=\bigcup_{n\in 2\N} T_{1,n}.
\end{equation*}

\subsubsection{Step $2$}
We continue in a similar way, this time compressing $T_{1,n}$, $n=0,2,\dots,2^{M^d}-2$ at $x_2$. More precisely, for $n=2,6,10,\dots,2^{M^d}-2$, we translate $T_{1,n}$ further by some $(u_{2,n},v_{2,n})$ so that the curve $L_{0,n}$, after translation, is tangent to $L_{0,n-2}$ at $x_2$. Similarly to \eqref{eqn:T1}, we obtain $2^{M^d-2}$ larger curved figures $T_{2,n}$, $n=0,4,\dots,2^{M^d}-4$, which are well compressed at $x_2$. Denote
\begin{equation*}
    T_2:=\bigcup_{n\in 4\N} T_{2,n}.
\end{equation*}

\subsubsection{Step $j$}
Given a general $1\le j\le m$. For each $n$ of the form $2^j p+2^{j-1}$, $p=0,1,\dots,2^{M^d-j}-1$, we translate $T_{j-1,n}$ by some $(u_{j,n},v_{j,n})$ so that the curve $L_{0,n}$ is tangent to $L_{0,n-2^{j-1}}$ at $x_j$. By the same computation as in Step 1, we have, similar to \eqref{eqn:Dec_23_u1},
\begin{equation}\label{eqn:unj}
    u_{j,n} = \delta_0 2^{j-1-M^d}h(a_n,x_j)+O(\delta_0 2^{j-1-M^d})^{1+\eta}.
\end{equation}
Now we analyse the expression of 
\begin{equation*}
    v_{j,n}:=f(a_{n-2^{j-1}},x_j)-f(a_n,x_j-u_{j,n}).
\end{equation*}
By Taylor expansion and \eqref{eqn:curved} through \eqref{eqn:Holder2}, we have
\begin{align*}
    f(a_{n-2^{j-1}},x_j)=f(a_n,x_j)-\delta_0 2^{j-1-M^d}\partial_a f(a_n,x_j)+O(\delta_0 2^{j-1-M^d})^{1+\eta},
\end{align*}
and using \eqref{eqn:unj},
\begin{align*}
    &f(a_n,x_j-u_{j,n})\\
    &=f(a_n,x_j)-u_{j,n}\cdot \nabla_x f(a_n,x_j)+O(\delta_0 2^{j-1-M^d})^{2}\\
    &=f(a_n,x_j)-\delta_0 2^{j-1-M^d} h(a_n,x_j)\cdot \nabla_x f(a_n,x_j)+O(\delta_0 2^{j-1-M^d})^{1+\eta},
\end{align*}
where $h$ is as defined in \eqref{eqn:defn_h}. Thus we have
\begin{equation}\label{eqn:vnj}
    \begin{aligned}
        v_{j,n}
        &=\delta_0 2^{j-1-M^d} h(a_n,x_j)\cdot \nabla_x f(a_n,x_j)-\partial_a f(a_n,x_j)+O(\delta_0 2^{j-1-M^d})^{1+\eta},
    \end{aligned}
\end{equation}
and so in particular,
\begin{equation}\label{eqn:Dec_9_02}
    |u_{j,n}|\lesssim \delta_0 2^{j-M^d},\quad |v_{j,n}|\lesssim \delta_0 2^{j-M^d}.
\end{equation}
Hence, similarly to \eqref{eqn:T1}, we obtain $2^{M^d-j}$ larger curved figures $T_{j,n}$, $n=0,2^j,\dots,2^{M^d}-2^j$, which are well compressed at $x_j$. The representative function of such a $T_{j,n}$ is still $L_{0,n}$. Denote
\begin{equation*}
    T_j:=\bigcup_{n\in 2^j \N} T_{j,n}.
\end{equation*}

\subsubsection{End of construction}
We perform the above procedures for $m$ times at each tangent point $x_j$, $j=1,\dots,m$, arriving at the set $T_{m}$. 

\subsection{Computation of translations}

We now analyse the sum of all translations $(u_{j,n},v_{j,n})$ that have been performed to the original curved rectangle $T_{0,n}$ at Steps $j=1,2,\dots,m$.

Given $n=0,1,\dots,2^{M^d}-1$, by binary expansions, we know there exist unique integers $\eps_j=\eps_j(n)\in \{0,1\}$, $1\le j\le m$ such that $n=\sum_{j=1}^{m} \eps_j 2^{j-1}$. 

For convenience, we introduce the notation
\begin{equation*}
    \fj{n}{j}:=n-(n\,\,\mathrm{ mod  }\,\,2^{j-1}),
\end{equation*}
which means the ``integral part" of $n$ in in $2^{j-1}\Z$. Then we note that $\eps_j(n)=1$ if and only if $u_{\fj n j}$, $v_{\fj n j}$ are defined by Step $j$. 

\begin{proposition}\label{prop:translation_bound}
For $1\le j\le m$ and $n=0,1,\dots,2^{M^d}-1$, denote the partial sums 
\begin{equation*}
    U_{j,n}:=\sum_{i=1}^j \eps_i(n) u_{i,\fj n i},\quad V_{j,n}:=\sum_{i=1}^j \eps_i(n) v_{i,\fj n i}.
\end{equation*}
Then we have
\begin{equation}\label{eqn:partial_sum_bound}
    |U_{j,n}|\lesssim \delta_0 2^{j-M^d},\quad |V_{j,n}|\lesssim  \delta_0 2^{j-M^d}.
\end{equation}
In particular, the total translation $(U_{m,n},V_{m,n})$ of $T_{0,n}$ after $m$ steps satisfies
    \begin{equation}\label{eqn:unvn}
        |U_{m,n}|\lesssim \delta_0 2^{m-M^d},\quad |V_{m,n}|\lesssim \delta_0 2^{m-M^d}.
    \end{equation}
\end{proposition}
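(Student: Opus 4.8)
The plan is to prove the bound \eqref{eqn:partial_sum_bound} for the partial sums $U_{j,n}, V_{j,n}$ by induction on $j$, from which \eqref{eqn:unvn} is the special case $j=m$. The key point is that the individual translations performed at step $i$ to the block containing index $n$ have size $O(\delta_0 2^{i-M^d})$ by \eqref{eqn:Dec_9_02}, so the naive triangle inequality already gives $\sum_{i=1}^j |u_{i,\fj n i}| \lesssim \delta_0 \sum_{i=1}^j 2^{i-M^d} \lesssim \delta_0 2^{j-M^d}$, which is exactly \eqref{eqn:partial_sum_bound}, and no cancellation is needed. So the heart of the matter is just to organize the bookkeeping: when we pass from $U_{j-1,n}$ to $U_{j,n}$, the only new term added is $\eps_j(n) u_{j,\fj n j}$, and this term is present only when $\eps_j(n)=1$, in which case it contributes at most $O(\delta_0 2^{j-1-M^d})$ by \eqref{eqn:Dec_9_02}.

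Concretely, I would first recall the definition $U_{j,n} = \sum_{i=1}^j \eps_i(n) u_{i,\fj n i}$ and note $U_{j,n} = U_{j-1,n} + \eps_j(n) u_{j,\fj n j}$. For the base case $j=1$: either $\eps_1(n)=0$ and $U_{1,n}=0$, or $\eps_1(n)=1$ and $U_{1,n}=u_{1,\fj n 1}$, which by \eqref{eqn:Dec_23_u1} (equivalently \eqref{eqn:Dec_9_02} with $j=1$) satisfies $|U_{1,n}|\lesssim \delta_0 2^{1-M^d}$. For the inductive step, assume $|U_{j-1,n}|\lesssim \delta_0 2^{j-1-M^d}$; then
\[
|U_{j,n}| \le |U_{j-1,n}| + |u_{j,\fj n j}| \lesssim \delta_0 2^{j-1-M^d} + \delta_0 2^{j-1-M^d} \lesssim \delta_0 2^{j-M^d},
\]
using \eqref{eqn:Dec_9_02} for the second term (note $u_{j,\fj n j}$ is one of the translations produced at Step $j$, indexed by $n$ of the form $2^j p + 2^{j-1}$, so the bound applies). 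The argument for $V_{j,n}$ is identical, again invoking \eqref{eqn:Dec_9_02}. Setting $j=m$ yields \eqref{eqn:unvn}.

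I do not expect any genuine obstacle here — the estimate is a geometric-series bound and the induction is routine. The only thing to be a little careful about is the indexing: one must check that for each $n$ the index $\fj n j$ appearing at Step $j$ really is of the form $2^j p + 2^{j-1}$ whenever $\eps_j(n)=1$, so that the per-step bounds \eqref{eqn:Dec_9_02}, which were derived precisely for those indices, are applicable. This follows directly from the binary expansion $n = \sum_{i} \eps_i(n) 2^{i-1}$ together with the definition of $\fj{\cdot}{j}$: if $\eps_j(n)=1$ then $n \bmod 2^j = (n \bmod 2^{j-1}) + 2^{j-1}$, so $\fj n j = n - (n \bmod 2^{j-1})$ has binary digit $1$ in position $j-1$ and zeros below, i.e. $\fj n j = 2^j p + 2^{j-1}$ for $p = \lfloor n/2^j\rfloor$. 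With this observation in place the induction goes through verbatim.
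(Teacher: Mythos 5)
Your proposal is correct and matches the paper's approach: the paper proves this "by inspection using \eqref{eqn:Dec_9_02}" (i.e., summing the geometric series $\sum_{i\le j} \delta_0 2^{i-M^d}\lesssim \delta_0 2^{j-M^d}$), illustrated by the example $n=27$; your inductive phrasing is just explicit bookkeeping for the same estimate. Your check that $\fj{n}{j}$ has the form $2^jp+2^{j-1}$ whenever $\eps_j(n)=1$, so that \eqref{eqn:Dec_9_02} applies, correctly fills in the detail the paper leaves to the reader.
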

\begin{proof}
    The proof is obtained by inspection using \eqref{eqn:Dec_9_02}. For example, if $m>100$ and $n=27$, then $T_{0,n}$ is translated according to the representative curves of $T_{0,27}$, $T_{0,26}$, $T_{0,24}$ and $T_{0,16}$ in steps $1,2,4,5$, respectively; it remains unchanged in all other steps. Note that $\eps_j(27)=1$ if and only if $j=1,2,4,5$, whence $27-(27\,\,\mathrm{ mod }\,\,2^{j-1})=27,26,24,16$, respectively.
\end{proof}
In this way, we see that if we define $u(a)=U_{m,n}$ and $v(a)=V_{m,n}$ for $a\in (a_n,a_n+\delta_0 2^{-M^d}])$, then the set $F_M$ defined in \eqref{eqn:F_M} is just $T_m$, since by \eqref{eqn:defn_lambda} and \eqref{eqn:defn_m}, we have in particular that $M^d-m\ge M^d/3$. Thus this proposition ensures that for a large $M$, the total distance of translations is tiny; in particular, the projections of the translated curved rectangles onto the $x$ coordinate are within the $O(\delta_0 2^{-M^d/3})$ neighbourhood of $R_0$.

\subsection{Upper bound of measure}
In this subsection, we control the measure of the $\delta_0 2^{-M^d}$ neighbourhood of $F_M$. Since $2^{-M^d}\ll M^{-1-\alpha}$ for $M$ large enough, it suffices to show that for each $x'\in N_{\delta_0 2^{-M^d}}(R_0)$,
\begin{equation}\label{eqn:measure_bound}
    |\{y:(x',y)\in F_M\}|\lesssim \delta_0 M^{-1-\alpha}.
\end{equation}
Since $2^{-M^d/3}\ll M^{-1-\alpha}$ for $M$ large enough, it suffices further to prove \eqref{eqn:measure_bound} for those $x'\in R_0$ that are at least $O(\delta_0 2^{-M^d/3})$ away from the boundary of $R_0$. For these $x'$, we can ensure that $x'-U_{j,n}$ is within $R_0$ for each $j,n$.

Now we fix such an $x'$. Then there is some $j$ such that the grid point $x_j$ is closest to $x'$; we may assume $j\ge \lambda$ by \eqref{eqn:defn_x_j}. For each $n=0,1,\dots,2^{M^d}-1$, we write
\begin{equation}\label{eqn:n_sum}
    n=p+\sum_{i=1}^j \eps_i \cdot 2^{i-1}\quad (p=0,2^j,\cdots, 2^{M^d}-2^j).
\end{equation}
In other words, for each $j$, we combine the translated curved rectangles $T_{0,n}+(U_{m,n},V_{m,n})$ into $2^{M^d-j}$ groups, and the curved rectangles in the $p2^{-j}$-th group continue to be translated together after Step $j$.

Denote by $L_{j,n}$ the representative function of $T_{0,n}$ after $j$ steps of translations, namely,
\begin{equation}\label{eqn:Lnj}
    L_{j,n}(x):=L_{0,n}(x-U_{j,n})+V_{j,n},
\end{equation}
where we recall $L_{0,n}(x)=f(a_n,x)$. By the triangle inequality, it suffices to show that the thickness of the $p2^{-j}$-th group is $\lesssim \delta_0 M^{-1-\alpha}2^{j-M^d}$. More precisely, we need to show
\begin{equation*}
    |L_{m,n}(x')-L_{m,p}(x')|+\delta_0 2^{-M^d}\lesssim \delta_0 M^{-1-\alpha}2^{j-M^d}.
\end{equation*}
Here, $|L_{m,n}(x')-L_{m,p}(x')|$ is the distance between the representative functions after $m$ steps of translations, and $\delta_0 2^{-M^d}$ is essentially the thickness of one smallest curved rectangle thicken by $\delta_0 2^{-M^d}$, since $\norm{\partial_a f}_\infty\lesssim 1$. But by our choice that $j\ge \lambda\sim 2\log M$, we have 
\begin{equation}\label{eqn:single_thickness_bound}
    2^{-M^d}\lesssim M^{-1-\alpha}2^{j-M^d},
\end{equation}
and thus our task reduces to showing
\begin{equation*}
    |L_{m,n}(x')-L_{m,p}(x')|\lesssim \delta_0 M^{-1-\alpha}2^{j-M^d}.
\end{equation*}
To this end, we trace back to the configuration right after Step $j$. That is, we let 
\begin{equation*}
    x:=x'-\sum_{i=j+1}^{m-1}u_{i,\fj n i}, 
\end{equation*}
whose distance from $x_j$ is within $O(M^{-1}+2^{j-M^d})=O(M^{-1})$, by the definition that $x_j$ is the closest grid point to $x'$. Using the fact that the translations for the $n$-th and the $p$-th pieces after Step $j$ are the same, we have
\begin{equation*}
    L_{m,n}(x')-L_{m,p}(x')=L_{j,n}(x)-L_{j,p}(x).
\end{equation*}
By \eqref{eqn:Lnj}, we have 
$$
\left\{
\begin{aligned}
    & L_{j,n}(x) =  f(a_n, x- U_{j,n} ) + V_{j,n}, \\
    & L_{j,p}(x) =  f(a_p, x- U_{j,p} ) + V_{j,p}, \\
\end{aligned}
\right.
$$
where, by definition, we have $U_{j,p} = V_{j,p} =0$. Thus
\begin{equation*}
    L_{j,n}(x)-L_{j,p}(x)=f(a_n, x- U_{j,n} )-f(a_p,x)+V_{j,n}.
\end{equation*}
Using Taylor expansion, we have
\begin{align*}
    f(a_n, x- U_{j,n} )
    &=f(a_n,x)-\nabla_x f(a_n,x)U_{j,n}+O(|U_{j,n}|)^2,
\end{align*}
and also
\begin{align*}
    f(a_p,x)=f(a_n,x)+(a_p-a_n)\partial_a f(a_n,x)+O(|a_p-a_n|^2).
\end{align*}
Since $a_n-a_p\le\delta_0 2^{j-M^d}$, by \eqref{eqn:partial_sum_bound}, we have 
\begin{equation*}
\begin{aligned}
    &L_{j,n}(x)-L_{j,p}(x)\\
    &=(a_n-a_p)\partial_a f(a_n,x)-\nabla_x f(a_n,x)\cdot U_{j,n}+V_{j,n}+O(\delta_0^2 2^{2j-2M^d}).
\end{aligned}    
\end{equation*}
Since $2^{2j-2M^d}\ll M^{-1-\alpha}2^{j-M^d}$, it now suffices to show that
\begin{equation}\label{eqn:Dec_25}
    (a_n-a_p)\partial_a f(a_n,x)-\nabla_x f(a_n,x)\cdot U_{j,n}+V_{j,n}=O(\delta_0 M^{-1-\alpha}2^{j-M^d}).
\end{equation}
To this end, we use Proposition \ref{prop:translation_bound}, \eqref{eqn:unj}, \eqref{eqn:vnj} and \eqref{eqn:n_sum} to compute
\begin{align}
    &(a_n-a_p)\partial_a f(a_n,x)-\nabla_x f(a_n,x)\cdot U_{j,n}+V_{j,n}\nonumber\\
    &=\frac{\delta_0}{2^{M^d}}\sum_{i=1}^j       \eps_i 2^{i-1} \left[ \partial_a f(a_n,x) 
    -\partial_a f(a_{\fj n i},x_i)\right.\nonumber\\
    &\left. +h(a_{\fj n i},x_i) \cdot \left(\nabla_x f(a_n,x)-\nabla_x f(a_{\fj n i},x_i)\right)+O(\delta_0 2^{i-M^d})^{\eta}\right],\label{eqn:Dec_10_02}
\end{align}
where $h$ is as in \eqref{eqn:defn_h}. We now fix $i$ and invoke the assumptions in Definition \ref{defn:regular_curved} to estimate
\begin{equation*}
    \partial_a f(a_n,x_i)-\partial_a f(a_{\fj n i}, x_i).
\end{equation*}
Using $n-\fj n i\le 2^{i-1}$, we have $|a_n-a_{\fj n i}|\le \delta_0 2^{i-1-M^d}$. By \eqref{eqn:Holder1}, we have
\begin{equation*}
    |\partial_a f(a_n,x_i)-\partial_a f(a_{\fj n i}, x_i)|\lesssim \delta_0 2^{(i-M^d)\eta}.
\end{equation*}
This means in \eqref{eqn:Dec_10_02}, we can replace $\partial_a f(a_{n}, x_i)$ by $\partial_a f(a_{\fj n i}, x_i)$. In a similar way using \eqref{eqn:Holder2} this time, we may also replace $\nabla_x f(a_n,x)$ by $\nabla_x f(a_{\fj n i},x)$. Thus the right hand side of \eqref{eqn:Dec_10_02} becomes
\begin{equation*}
    O(\delta_0 2^{j-1-M^d})^{1+\eta}+\delta_0 2^{-M^d}\sum_{i=1}^j \eps_i 2^{i-1}[g(x)-g(x_i)],
\end{equation*}
where
\begin{equation*}
\begin{aligned}
    g(x)
    &:=\partial_a f(a_{\fj n i},x) 
    -h(a_{\fj n i},x_i) \cdot \nabla_x f(a_{\fj n i},x)\\
    &=\partial_a f(a_{\fj n i},x) 
    -\left((D^2_x f)^{-1}(a_{\fj n i},x_i)\nabla_x \partial_a f(a_{\fj n i},x_i) \right)\cdot \nabla_x f(a_{\fj n i},x).
\end{aligned}
\end{equation*}
Since $2^{(j-M^d)\eta}\ll M^{-1-\alpha}$, to prove \eqref{eqn:Dec_25}, it suffices to prove
\begin{equation}
    \sum_{i=1}^j 2^i |g(x)-g(x_i)|\lesssim 2^j M^{-1-\alpha}.
\end{equation}
Now to prove this, note that by direct computation, 
\begin{equation*}
\begin{aligned}
    \nabla_x g(x)
    &=\nabla_x \partial_a f(a_{\fj n i},x)-D^2_x f(a_{\fj n i},x) (D^2_x f)^{-1}(a_{\fj n i},x_i)\nabla_x \partial_a f(a_{\fj n i},x_i),
\end{aligned}
    \end{equation*}
which satisfies $\nabla_x g(x_i)=0$. Thus, by \eqref{eqn:Lipschitz}, we have $|g(x)-g(x_i)|\lesssim |x-x_i|^{1+\alpha}$, and so it suffices now to prove the following elementary inequality:
\begin{equation}\label{eqn:elementary}
    \sum_{i=1}^j 2^i |x-x_i|^{1+\alpha}\lesssim 2^j M^{-1-\alpha},\quad \forall \, |x-x_j|\lesssim M^{-1}.
\end{equation}
To prove this, we first perform some reductions. The first reduction is to reduce to essentially the case $\lambda=0$. More precisely, write the above sum as
\begin{equation*}
    \sum_{i=1}^{\lambda} 2^i |x-x_1|^{1+\alpha}+\sum_{i=\lambda+1}^j 2^i |x-x_i|^{1+\alpha}.
\end{equation*}
The first sum is essentially $2^\lambda |x-x_1|^{1+\alpha}$. We need to show $2^{j-\lambda}\gtrsim M^{1+\alpha}|x-x_1|^{1+\alpha}$. But $|x-x_j|\lesssim M^{-1}$, so by definition of the back-and-forth order, we have $|x-x_1|\lesssim (j-\lambda+1)M^{-1}$, and so $|x-x_1|^{1+\alpha}\lesssim 2^{j-\lambda}M^{-1-\alpha}$. Thus it suffices to prove 
\begin{equation}\label{eqn:Dec_25_02}
    \sum_{i=\lambda+1}^j 2^i |x-x_i|^{1+\alpha}\lesssim 2^j M^{-1-\alpha}\quad \forall \, |x-x_j|\lesssim M^{-1}.
\end{equation}
To prove the last inequality, we may perform a second reduction, namely, the special case $x=x_j$. Indeed, for $i=j$, we have $2^j |x-x_j|^{1+\alpha}\lesssim 2^j M^{-1-\alpha}$ since $|x-x_j|\lesssim M^{-1}$. For $i<j$, we have $|x-x_i|\lesssim |x_j-x_i|$, and so to prove \eqref{eqn:Dec_25_02}, it suffices to prove
\begin{equation*}
    \sum_{i=\lambda+1}^{j-1} 2^i |x_j-x_i|^{1+\alpha}\lesssim 2^j M^{-1-\alpha}.
\end{equation*}
Changing variables $j':=j-\lambda$ and $i':=i-\lambda$ and using the definition of $x_j$ \eqref{eqn:defn_x_j}, it is equivalent to proving
\begin{equation*}
    \sum_{i'=1}^{j'-1}2^{i'}|\tilde x_{j'}-\tilde x_{i'}|^{1+\alpha} \lesssim 2^{j'}M^{-1-\alpha}.
\end{equation*}
But by a trivial rescaling and $\alpha\in (0,1]$, this just follows from Lemma \ref{lem:elementary} below. In conclusion, we have proved that $N_{\delta_0 2^{-M^d}}(F_M)\lesssim \delta_0 M^{-1-\alpha}$.

\subsection{An elementary inequality}
\begin{lemma}\label{lem:elementary}
    Let $J,d\in \N$ and let $n_j$, $1\le j\le J^d$ be a sequence in $\{1,\dots,J\}^d$ obeying $|n_{j+1}-n_j|=1$ for all $j$. Then for each $1\le j\le J^d$ we have the estimate
    \begin{equation}\label{eqn:elementary_general}
        \sum_{i=1}^{j}2^i |n_j-n_i|^2\lesssim 2^j,
    \end{equation}
    where the implicit constant is independent of $d,J$.
\end{lemma}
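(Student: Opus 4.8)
The plan is to exploit the fact that the sequence $n_j$ moves one unit step at a time, so that the number of indices $i$ with $|n_j - n_i| \le r$ is controlled. Fix $j$ and set $N := j$. Since consecutive terms satisfy $|n_{i+1}-n_i|=1$, for any integer $r \ge 1$ the set of indices $i \le j$ with $|n_j - n_i| < r$ contains, in particular, all of $i = j, j-1, \dots, j-(r-1)$ (the last $r$ steps can move us at most distance $r-1$ from $n_j$ in sup-norm, hence also in Euclidean norm up to a $\sqrt d$ factor, which we absorb — actually it is cleaner to work with $|n_j-n_i|$ directly and note $|n_j - n_i| \le i' $ where $i' = j - i$ counts backward steps). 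So I would first record the elementary bound $|n_j - n_i| \le j - i$ for $i \le j$.

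Next, I would dyadically decompose the sum according to the size of the backward distance $k := j - i$, writing $\sum_{i=1}^j 2^i |n_j - n_i|^2 = \sum_{k=0}^{j-1} 2^{j-k} |n_j - n_{j-k}|^2$. Using $|n_j - n_{j-k}| \le k$, this is at most $2^j \sum_{k=0}^{j-1} 2^{-k} k^2$. Since $\sum_{k \ge 0} 2^{-k} k^2$ is a convergent series summing to an absolute constant ($=6$), we get the desired bound $\lesssim 2^j$, with implied constant independent of $d$ and $J$. Note that this argument never actually uses that the $n_i$ live in $\{1,\dots,J\}^d$ — only the unit-step property and that we go back at most $j-1$ steps — so the dimension-independence is automatic and transparent.

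The only subtlety, and the step I would be most careful about, is the relation between the "curve-distance" $j - i$ and the Euclidean distance $|n_j - n_i|$: a unit step in $\{1,\dots,J\}^d$ with the $\ell^2$ norm changes exactly one coordinate by $\pm 1$, so $|n_{i+1}-n_i|=1$ genuinely forces $|n_j - n_i| \le j - i$ by the triangle inequality, with no dimensional loss. (If instead one used the sup-norm convention $|n_{i+1}-n_i|_\infty = 1$, a $\sqrt d$ would creep in and would need to be absorbed differently, but as stated the $\ell^2$ convention gives the clean bound.) Everything else is the routine convergence of $\sum 2^{-k}k^2$, so there is essentially no hard part; the content of the lemma is really just this observation plus geometric-series summation. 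I would also remark that the exponent $2$ here, combined with $\alpha \in (0,1]$ in the application, is what makes the substitution $|n_j - n_i|^{1+\alpha} \le |n_j-n_i|^2$ (valid since $|n_j-n_i|\ge 1$ when $i \ne j$) legitimate, which is how \eqref{eqn:elementary_general} feeds back into \eqref{eqn:Dec_25_02}.
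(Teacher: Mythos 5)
Your proof is correct, but it takes a genuinely different and simpler route than the paper's. The paper sets $a_i := |n_j - n_i|^2$, uses $a_j = 0$, and applies Abel summation (summation by parts) twice: it bounds the increments $\bigl||n_j-n_i| - |n_j-n_{i-1}|\bigr| \le 1$ from the unit-step hypothesis, thereby reducing the problem to the analogous estimate with $|n_j - n_i|^2$ replaced by $|n_j - n_i|$, and then Abel-sums once more. You instead observe the direct path-length bound $|n_j - n_i| \le j - i$ (triangle inequality applied $j-i$ times), substitute $k = j - i$, and are left with the convergent constant $\sum_{k \ge 0} k^2 2^{-k} = 6$. Both arguments rest solely on the unit-step property --- neither actually needs the assumption that the $n_i$ lie in $\{1,\dots,J\}^d$ --- so the independence of the constant from $d$ and $J$ is equally automatic in both. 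Your version is shorter and more transparent for the exponent $2$, and indeed works verbatim for any exponent $\beta \ge 0$ since $\sum_{k\ge 0} k^\beta 2^{-k}$ converges; the paper's double-Abel-summation scaffolding is what lies behind the remark following the lemma about handling general $\beta \in \R$ by induction (your crude bound would point the wrong way for $\beta < 0$). Your closing observation --- that $\alpha \le 1$ together with $|n_j - n_i| \ge 1$ for $i < j$ on the (bijective) path makes $|n_j - n_i|^{1+\alpha} \le |n_j - n_i|^2$ legitimate --- is also accurate and correctly explains how the lemma is invoked to prove \eqref{eqn:Dec_25_02}.
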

\begin{proof}
Denote $a_i:=|n_j-n_i|^2$. Using summation by parts formula and the fact that $a_j=0$, we have
\begin{equation*}
    \sum_{i=1}^j 2^i a_i=-2a_1-\sum_{i=2}^{j} 2^i(a_{i}-a_{i-1}).
\end{equation*}
Since $a_1=|n_j-n_1|^2\le (j-1)^2\lesssim 2^j$, it suffices to show that
\begin{equation*}
    \sum_{i=2}^{j} 2^i(a_{i}-a_{i-1})=O(2^j),
\end{equation*}
namely,
\begin{equation*}
    \sum_{i=2}^j 2^i \big(|n_j-n_i|+|n_{j}-n_{i-1}|\big)\big||n_j-n_i|-|n_{j}-n_{i-1}|\big|\lesssim 2^j.
\end{equation*}
But since $|n_{j+1}-n_j|=1$, by the triangle inequality, we have
\begin{equation*}
    \big||n_j-n_i|-|n_{j}-n_{i-1}|\big|\le |n_i-n_{i+1}|=1,
\end{equation*}
and so it suffices to show that 
\begin{equation*}
    \sum_{i=2}^j 2^i \big(|n_j-n_i|+|n_{j}-n_{i-1}|\big)\lesssim 2^j.
\end{equation*}
But this follows from a similar summation by parts technique as above. Thus the result follows.
\end{proof}
{\it Remark.} By induction, we can even show that \eqref{eqn:elementary_general} holds with $|n_j-n_i|^2$ replaced by $|n_j-n_i|^\beta$ for any $\beta\in \R$, with the implicit constant depending on $\beta$.

\section{Packing curved hypersurfaces}
In this section we prove the following proposition, which implies Theorem \ref{thm:surface}.

\begin{proposition}\label{prop:surface_delta}
Let $I_0:=[A,B]$ and $I_0\times P_0\sub \R\times\R^{d+1}$ be a compact rectangle. Let $\Phi(a,x):I_0\times P_0\to \R$ define a regular family of curved hypersurfaces $S_a$. Then for every $M\ge 1$, there exists a function $w(a)\in \R^{d+1}$ of the order $O(2^{-cM^d})$ (where $c\sim 1)$ that is piecewise constant on intervals of the form $(A+j2^{-M^d}(B-A),A+(j+1)2^{-M^d}(B-A))$, $0\le j\le 2^{M^d}-1$, such that the $2^{-M^d}$ neighbourhood of the following subset
\begin{equation}\label{eqn:surface_M}
\begin{aligned}
    E:=&\bigcup_{a\in I_0}\{x+w(a):x\in S_a\}
\end{aligned}
\end{equation}
has Lebesgue measure $\lesssim M^{-1-\alpha}$.

Here and throughout this section, the implicit constants are independent of $M$, but may depend on every other parameter involved.
\end{proposition}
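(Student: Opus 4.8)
The plan is to deduce Proposition~\ref{prop:surface_delta} from the graph case, Proposition~\ref{prop:f(a,x)_delta}, the one genuinely new ingredient being a device that makes the translations compatible across the several coordinate charts needed to express a single hypersurface as a graph.

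\emph{Step 1: reduction to graphs.} The zero set $Z:=\{(a,x)\in I_0\times P_0:\Phi(a,x)=0\}$ is compact, and by \eqref{eqn:gradient} at each of its points some partial derivative $\partial_i\Phi$ has modulus $\ge c_0'/\sqrt{d+1}$. Using the uniform continuity of $D^2_x\Phi$ (from \eqref{eqn:Lipschitz'}) and of $\nabla_x\Phi,\partial_a\Phi$ in $a$ (from \eqref{eqn:Holder1'}--\eqref{eqn:Holder2'}), one covers $Z$ by boxes $J\times Q$, a number $O(1)$ of them depending only on the parameters of Definition~\ref{defn:regular_curved_surface}, on each of which $|\partial_{i}\Phi|\ge c_0'/(2\sqrt{d+1})$ for some fixed axis direction $i$ and, by the implicit function theorem, $S_a\cap Q$ ($a\in J$) is a graph $x_i=f(a,\hat x)$ over a rectangle $R\subset\R^d$ in the remaining coordinates $\hat x$. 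Differentiating $\Phi(a,\hat x,f(a,\hat x))\equiv0$ and using the boundedness and Hölder hypotheses on $\Phi$ together with $|\partial_i\Phi|\gtrsim1$, each such $f$ is seen to be a regular curved function (Definition~\ref{defn:regular_curved}) with $\vec C_f$ as in \eqref{eqn:vec_Cf} controlled by the data of $\Phi$; the curvature bound \eqref{eqn:curved} for $f$ is precisely \eqref{eqn:curved'}, via the standard identity $\det D^2_{\hat x}f=\pm(\partial_i\Phi)^{-(d+2)}\det\begin{bmatrix}D^2_x\Phi & \nabla_x\Phi\\ (\nabla_x\Phi)^T & 0\end{bmatrix}$ and \eqref{eqn:gradient}. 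Since a finite union of sets with $\delta$-neighbourhood of measure $\lesssim M^{-1-\alpha}$ again has such a $\delta$-neighbourhood, we may restrict to a common subinterval and assume all the boxes share the $a$-interval $I_0$; write $r=O(1)$ for the number of charts $(i_k,R_k,f_k)$, $1\le k\le r$, chosen (after a slight shrinking, with overlaps kept) so that the corresponding graphs cover $S_a$ for every $a\in I_0$, and set $\delta_0:=|I_0|$.

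\emph{Step 2: interleaved construction.} We run the tree construction of Section~\ref{sec:f(a,x)} once, distributing its tangency steps among the $r$ charts. Fix $C=C(d,r)$ so large that $r\cdot(2(M/C)^d)\le M^d/2$; in each $R_k$ take a grid of $\le(M/C)^d$ points of spacing $\sim C/M$ and a continuous path $x_1^{(k)},\dots,x_{m}^{(k)}$ through them as in Section~\ref{sec:path}, with the first point repeated $\sim\log M$ times as in \eqref{eqn:defn_x_j} (so $m\le(M/C)^d+O(\log M)$). Subdivide $I_0$ into $2^{M^d}$ equal cells with endpoints $a_n$, and carry out super-steps $s=1,\dots,rm$: at super-step $s=(j-1)r+k$ ($1\le k\le r$), for each $n\equiv 2^{s-1}\!\!\pmod{2^s}$ translate the bundle assembled so far by the vector $(u_{s,n},v_{s,n})\in\R^{d+1}$ (with $u_{s,n}$ along the coordinates of $R_k$, $v_{s,n}$ along $x_{i_k}$) chosen exactly as in Step~$j$ of the proof of Proposition~\ref{prop:f(a,x)_delta} so that the chart-$k$ representative $f_k(a_n,\cdot)$, as translated so far, becomes tangent to $f_k(a_{n-2^{s-1}},\cdot)$ at the point $x^{(k)}_{\lceil s/r\rceil}$; this is solvable with $|u_{s,n}|,|v_{s,n}|\lesssim\delta_0 2^{s-M^d}$ by \eqref{eqn:curved} for $f_k$, exactly as in \eqref{eqn:unj}--\eqref{eqn:vnj}. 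Summing, we get $a\mapsto w(a)\in\R^{d+1}$, constant on the above cells, with $|w(a)|\lesssim\delta_0 2^{rm-M^d}\lesssim 2^{-cM^d}$ and $c=1-rm/M^d\ge1/2$; put $E:=\bigcup_{a\in I_0}\{x+w(a):x\in S_a\}$ as in \eqref{eqn:surface_M}.

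\emph{Step 3: measure estimate, and the main obstacle.} Given $x'\in N_{2^{-M^d}}(E)$, it lies in $Q_k$ for some $k$ (since $|w(a)|\ll1$ and the charts were chosen to cover), so in the coordinates of $R_k$ it suffices to bound the length of the fibre $\{x_{i_k}:(x',x_{i_k})\in E\}$. Chart $k$ received its full quota of $m$ tangency steps, namely at super-steps $k,\,r+k,\,2r+k,\dots$, which form a geometric progression of ratio $2^r$; hence the ``Upper bound of measure'' argument of Section~\ref{sec:f(a,x)} applies to chart $k$ with $2$ replaced by $2^r$ and the grid spacing $1/M$ by $C/M$. The only nontrivial input, the analogue of \eqref{eqn:elementary}, is $\sum_i 2^{ri}|x-x_i^{(k)}|^{1+\alpha}\lesssim 2^{rj}M^{-1-\alpha}$ for $|x-x_j^{(k)}|\lesssim1/M$, which follows from Lemma~\ref{lem:elementary} and its closing remark after rescaling (the implicit constant depends only on $r=O(1)$ and $\alpha\le1$); the single-cell bound \eqref{eqn:single_thickness_bound} persists because chart $k$'s path first leaves its starting point only after super-step $\gtrsim\log M\ge2\log_2 M$. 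This yields a fibrewise bound $\lesssim\delta_0 M^{-1-\alpha}$ per chart, and summing over the $r=O(1)$ charts gives $|N_{2^{-M^d}}(E)|\lesssim M^{-1-\alpha}$. (The $O(1)$ initial subdivision of $I_0$ and the resulting mismatch with the cell endpoints in the statement are absorbed into the constants by replacing $M$ with $M/C'$.) The heart of the matter, and the only point that goes beyond Proposition~\ref{prop:f(a,x)_delta}, is exactly that one is allowed only a single translation of $S_a$: applying Proposition~\ref{prop:f(a,x)_delta} chart-by-chart fails, since a translation tuned to one chart destroys the tangencies already built in another. Interleaving all charts inside one tree construction fixes this, at the harmless price of spreading each chart's scales by the bounded factor $2^r$ (tolerated by Lemma~\ref{lem:elementary}) and coarsening the grid by the bounded factor $C$ (so that $rm\le M^d/2$ keeps the accumulated translation of size $2^{-cM^d}$); the remaining work — checking in Step~1 that the $f_k$ are regular curved with controlled constants and that $O(1)$ charts suffice — is routine.
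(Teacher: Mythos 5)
Your Step 1 (reduction to graphs via the implicit function theorem and the curvature identity for $\det D^2_{\hat x}f$) is in line with what the paper does, modulo the paper phrasing it through covering the unit sphere of normal directions. The real issue is with Steps 2--3: the interleaved tree construction does not deliver the claimed measure bound, and the ``obstacle'' you identify is in fact resolved by the paper in a completely different (and simpler) way.

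\textbf{The paper's route.} The paper \emph{does} apply Proposition~\ref{prop:f(a,x)_delta} chart by chart, but nested across scales rather than interleaved. With $M_0:=N^{-1/d}M$, it first applies Proposition~\ref{prop:f(a,x)_delta} to $f_1$ with $\delta_0=1$ and parameter $M_0$, producing a translation $w_1$ of size $O(2^{-M_0^d/3})$ constant on intervals of length $2^{-M_0^d}$. Then, inside each such interval $I_1$, it applies the proposition to $f_2$ with $\delta_0=2^{-M_0^d}$, producing $w_2$ of size $O(2^{-M_0^d}\cdot 2^{-M_0^d/3})$; and so on for $N$ charts. The crucial point is that the $i$-th chart's translations are of size $\lesssim 2^{-(i-1)M_0^d}\cdot 2^{-M_0^d/3}$, which is $\ll 2^{-(i-1)M_0^d}$, the neighbourhood scale at which chart $i-1$'s measure estimate was established. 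Hence a later chart's translation perturbs an earlier chart's set by less than the thickening radius already in play, and the earlier estimates survive unharmed. Your worry that ``a translation tuned to one chart destroys the tangencies already built in another'' is precisely what the geometric shrinking of $\delta_0$ prevents.

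\textbf{The gap in the interleaved construction.} In your scheme, at super-step $s$ the translation $t_{s,\cdot}$ is of size $\sim 2^{s-1-M^d}$ and is tuned to chart $k$ (with $s\equiv k\bmod r$) so that tangency holds at $x^{(k)}_{\lceil s/r\rceil}$. Now fix a fibre point $x'$ near $x^{(k)}_j$, at super-step $s=(j-1)r+k$, and trace the thickness of a $2^s$-bundle as in Section~\ref{sec:f(a,x)}. The telescoping sum
\[
L_{s,n}(x)-L_{s,p}(x)\;=\;\sum_{s'\le s}\eps_{s'}(n)\,\delta_0 2^{s'-1-M^d}\,E_{s'}(x)
\]
has, for $s'\equiv k\bmod r$, errors $E_{s'}(x)\lesssim |x-x^{(k)}_{j'}|^{1+\alpha}$ which are small by the path property of Lemma~\ref{lem:elementary}. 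But for $s'\not\equiv k\bmod r$, the super-step $s'$ was tuned to a different chart $k'$ at a point $x^{(k')}_{j'}$ that is at distance $\Omega(1)$ from $x$. In chart $k$'s coordinates the tangency error at $x$ is then only $E_{s'}(x)\lesssim |x-x^{(k')}_{j'}|^{1+\alpha}\sim 1$, not a power of $M^{-1}$. Hence $\sum_{s'\le s,\ s'\not\equiv k}2^{s'-1}E_{s'}(x)\sim 2^{s}$ (dominated already by $s'=s-1$), so the $2^{s}$-bundle has thickness $\gtrsim \delta_0 2^{s-M^d}$ rather than $\lesssim M^{-1-\alpha}\delta_0 2^{s-M^d}$, and the resulting fibre bound is $\gtrsim \delta_0$, i.e.\ no gain at all. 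Replacing ``$2$ by $2^r$'' and appealing to Lemma~\ref{lem:elementary} does not help: that lemma controls the sum over chart $k$'s \emph{own} tangency points along a \emph{continuous} path, but it says nothing about the contributions of the $r-1$ intermediate super-steps, whose tangency points live on faraway parts of $S_a$ and therefore contribute $O(1)$ errors.

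In short, Steps~2--3 of the proposal would need a mechanism forcing each intermediate super-step's tangency point to be within $O(M^{-1})$ of the point $x$ at which you are estimating the fibre, which is impossible once the charts are genuinely different. The paper sidesteps the whole issue by making the later charts' translations exponentially smaller than the earlier charts' thickening scales, so there is nothing to cancel against.
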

The rest of this section is devoted to the proof of this proposition.

\subsection{Partition of unity}
To prove Proposition \ref{prop:surface_delta}, we may first partition $I_0$ into subintervals that are short enough, and prove the proposition with $I_0$ replaced by a shorter subinterval. By abuse of notation we may still denote a shorter subinterval by $I_0$. We may assume without loss of generality that $I_0=[0,1]$.

Now we fix $b\in I_0$. Using \eqref{eqn:gradient} and a partition of unity, there exist $N=O(1)$ many coordinate charts, on each of which the surface $S_b$ is locally a graph of a function. More precisely, there exists a boundedly overlapping cover of $\mathbb S^{d}$ by $N$ open subsets $\Omega$, so that $\R^{d+1}$ is covered by sectors 
\begin{equation*}
    \Gamma:=\{r\omega:\omega\in \Omega,r\ge 0\},
\end{equation*}
and such that for each $\Omega$ the following holds: there exists a rotation $\rho:\R^{d+1}\to \R^{d+1}$, a compact rectangle $R\sub \R^d$, and a function $f_b:R\to \R$ such that (where $\overline{A}$ denotes the closure of $A$)
\begin{equation}\label{eqn:rotation_graph}
    \rho(S_b\cap \overline{\Gamma})=\{(x,f_b(x)):x\in R\}.
\end{equation}
Moreover, since $I_0$ is short enough, by \eqref{eqn:Holder1'} and \eqref{eqn:Holder2'}, we can ensure that for every $a\in I_0$, for the same $\Omega$, $\rho$ and $R$, there exists a function $f_a:R\to \R$ such that \eqref{eqn:rotation_graph} holds with $b$ replaced by $a$. Furthermore, if we define $f(a,x):=f_a(x)$ for $a\in I_0$, $x\in R$, then one can check that $f$ will be a regular curved function as in Definition \ref{defn:regular_curved}, using \eqref{eqn:gradient} through \eqref{eqn:Holder2'}.

We now denote these sectors as $\Gamma_i$ and these functions as $f_i(a,x):I_0\times R_i$, $1\le i\le N$.

Finally, the scale $M$ comes into play. We may assume the scale $M\in N^{1/d}\N$, and denote
\begin{equation}
    M_0:=N^{-1/d}M.
\end{equation}

Now we are ready to present the main construction. See Figure \ref{fig:circle} for the case of circles ($d=1$) after 3 steps.

\begin{figure}
    \centering
    \includegraphics[width=0.45\linewidth]{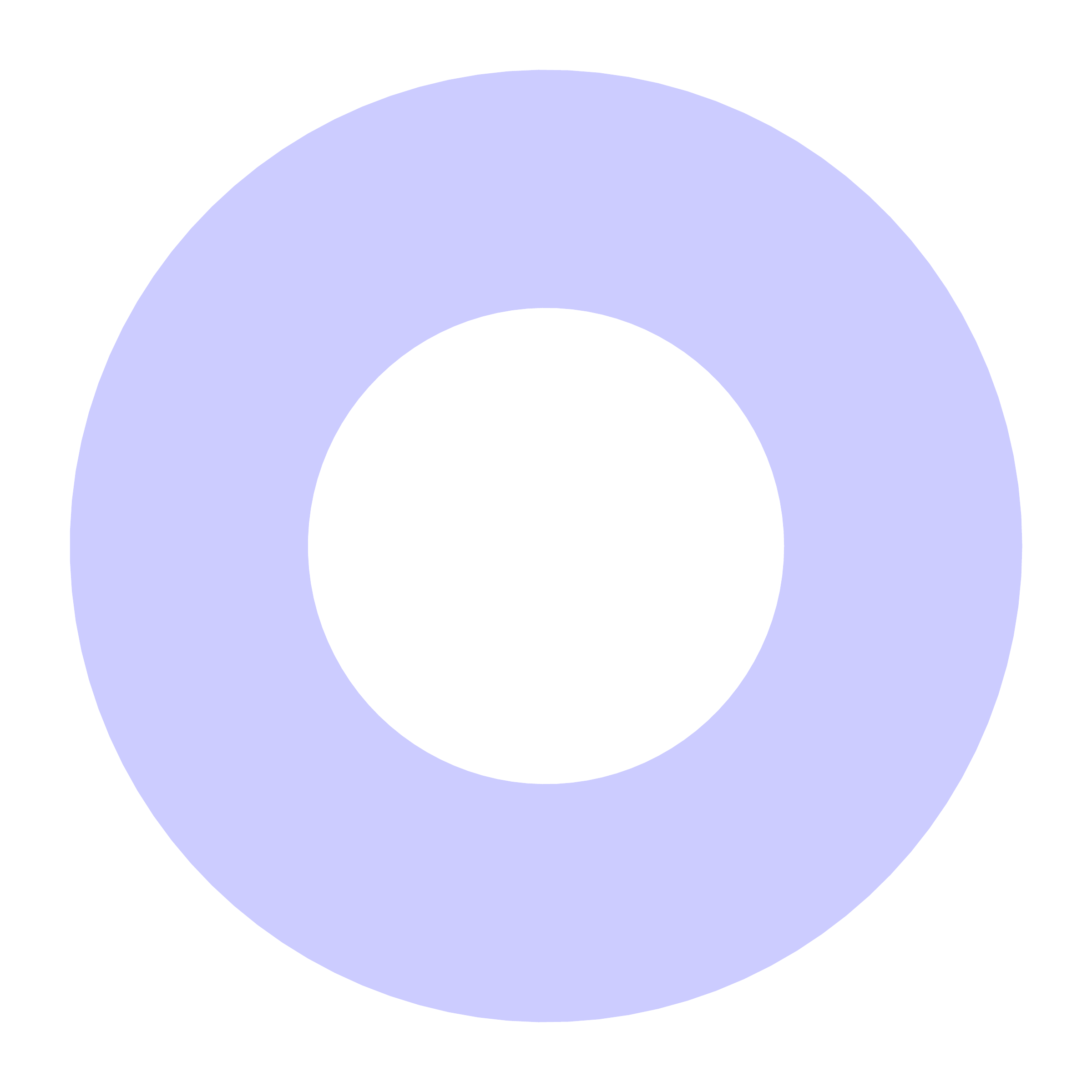}
    \includegraphics[width=0.45\linewidth]{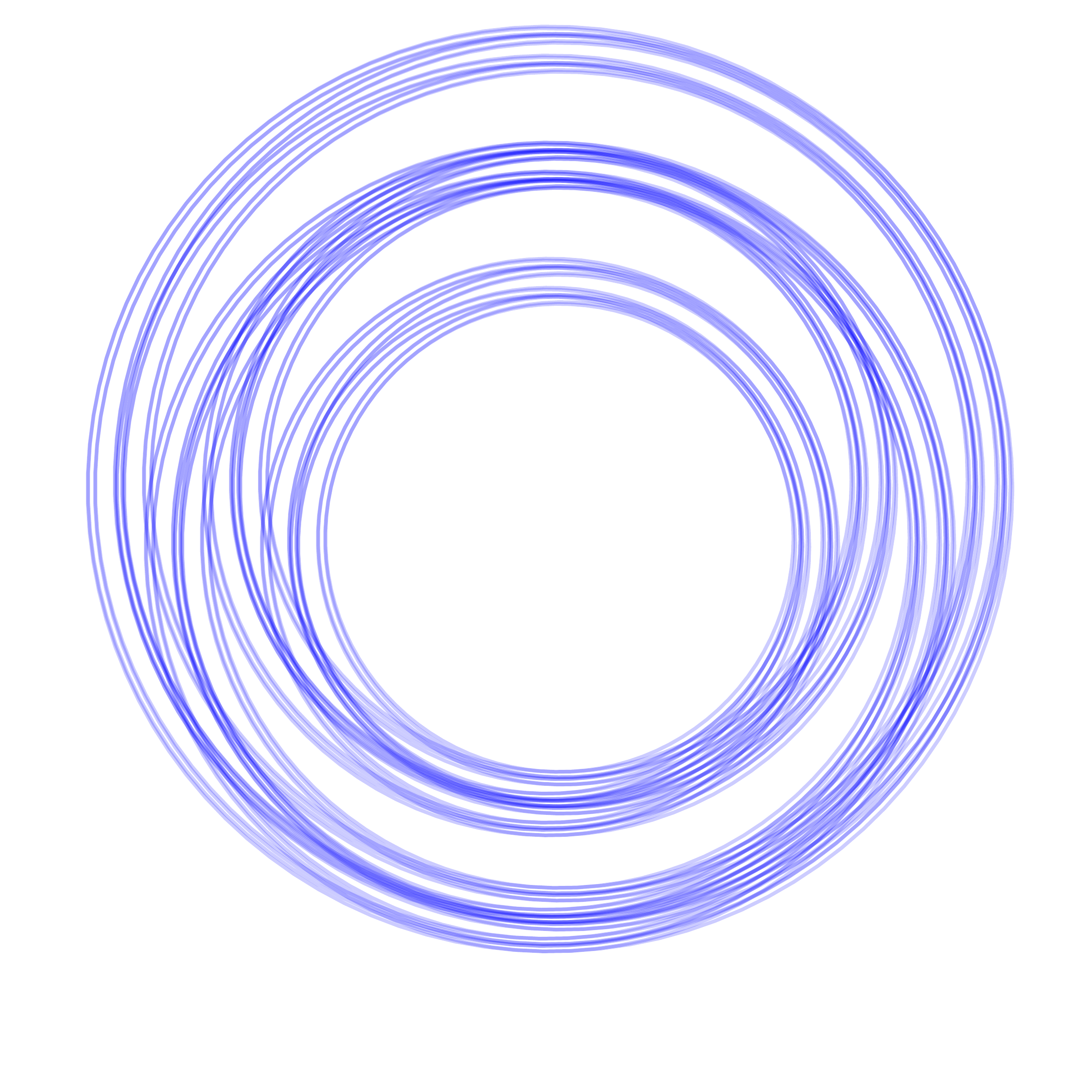}
    \caption{Translations of the annuli after 3 steps}
    \label{fig:circle}
\end{figure}

\subsection{Step 1}
Recall that we assumed $I_0=[0,1]$. We apply Proposition \ref{prop:f(a,x)_delta} to $f_1(a,x)$ with $a_0=0$, $\delta_0=1$ and $M$ replaced by $M_0$. This effectively gives a translation function $w_1(a)\in \R^{d+1}$ of the order $O(2^{-M_0^d/3})\ll M_0^{-1-\alpha}$ that is constant on intervals $I_1$ of the form $(k2^{-M_0^d},(k+1)2^{-M_0^d})$, such that the $2^{-M_0^d}$ neighbourhood of the subset
\begin{equation}\label{eqn:first_translation}
    \bigcup_{a\in I_0}\{x+w_1(a):x\in S_a\}\cap \Gamma_1
\end{equation}
has Lebesgue measure $\lesssim M_0^{-1-\alpha}$.

\subsection{Step 2}
Now we fix such an interval $I_1$ of length $2^{-M_0^d}$. After the first step of translations $w_1$, the graph of $x\mapsto f_2(a,x)$ becomes that of $x\mapsto f_2(a,x+u)+v$ for some suitable $u,v$ of the order $O(2^{-M_0^d/3})$. We apply Proposition \ref{prop:f(a,x)_delta} to this $f_2(a,x+u)+v$ with $\delta_0=2^{-M_0^d}$ and $M$ replaced by $M_0$. This effectively gives a translation function $w_2(a)\in \R^{d+1}$ of the order $O(2^{-M_0^d-M_0^d/3})$ that is constant on intervals $I_2$ of the form $(k2^{-2M_0^d},(k+1)2^{-2M_0^d})$, such that the $2^{-2M_0^d}$ neighbourhood of the subset
\begin{equation*}
    \bigcup_{a\in I_1}\{x+w_1(a)+w_2(a):x\in S_a\}\cap \Gamma_2
\end{equation*}
has Lebesgue measure $\lesssim 2^{-M_0^d}M_0^{-1-\alpha}$. Taking union over all $I_1\sub I_0$, we see that the $2^{-2M_0^d}$ neighbourhood of
\begin{equation*}
    \bigcup_{a\in I_0}\{x+w_1(a)+w_2(a):x\in S_a\}\cap \Gamma_2
\end{equation*}
has Lebesgue measure $\lesssim M_0^{-1-\alpha}$. Combining this and \eqref{eqn:first_translation} and using $2^{-M_0^d}\ll M_0^{-1-\alpha}$, we have that the $2^{-2M_0^d}$ neighbourhood of
\begin{equation*}
    \bigcup_{a\in I_0}\{x+w_1(a)+w_2(a):x\in S_a\}\cap (\Gamma_1\cup \Gamma_2)
\end{equation*}
has Lebesgue measure $\lesssim M_0^{-1-\alpha}$.

\subsection{Step $N$}
At Step 3, we fix an interval $I_2$ and apply Proposition \ref{prop:f(a,x)_delta} to $f_3(a,x+u')+v'$ for some suitable tiny $u',v'$ with $\delta_0=2^{-2M_0^d}$ and $M$ replaced by $M_0$. We perform the above process for $N$ times, arriving at a final subset of the form
\begin{equation*}
    E:=\bigcup_{a\in I_0}\{x+w(a):x\in S_a\},
\end{equation*}
where $w(a):=\sum_{i=1}^{N}w_i(a)$ is of the order $O(2^{-N^{-1}M^d/3})$, and is piecewise constant on intervals of the form $(A+j2^{-M^d}(B-A),A+(j+1)2^{-M^d}(B-A))$, $0\le j\le 2^{M^d}-1$.

Lastly, we see that the $2^{-NM_0^d}=2^{-M^d}$ neighbourhood of $E$ has Lebesgue measure $\lesssim M_0^{-1-\alpha}\sim M^{-1-\alpha}$, since $N\sim 1$ and $\cup_{1\le i\le N} \Gamma_i=\R^{d+1}$. This finishes the proof.

 \bibliographystyle{alpha}
 \bibliography{sample}

 \end{document}